% Template article for preprint document class `elsart'
% SP 2001/01/05

% \documentclass{elsart}

% Use the option doublespacing or reviewcopy to obtain double line spacing
% \documentclass[doublespacing]{elsart}
% \documentclass{elsart}
% \documentclass[doublespacing]{monsky2009} % for arXiv
% option "amsthm" provides theorem-like environments defined in amsthm package
% \documentclass[amsthm,leqno,doublespacing]{monsky2009} % for arXiv.
\documentclass[amsthm,leqno]{monsky2009} % for arXiv.

% if you use PostScript figures in your article
% use the graphics package for simple commands
% \usepackage{graphics}
% or use the graphicx package for more complicated commands
\usepackage{graphicx}
% or use the epsfig package if you prefer to use the old commands
% \usepackage{epsfig}

% The amssymb package provides various useful mathematical symbols
\usepackage{amssymb}
% the amsmath package provides the smallmatrix command, among other things
\usepackage{amsmath}

%kp note: for some reason the command \boldsymbol seems not usable with amssymb.  Use boldmath package as well?
\usepackage{bm}

%kp note: conjecture and proof are unnumbered in other papers by Monsky, and conjecture takes optional argument. Here conjecture is numbered. Theorem*, remark*, and corollary* can be numbered by hand (e.g. Han's Theorem)

\newtheorem*{ctheorem}{Classification Theorem}
\newtheorem*{corollary*}{Corollary}
\newtheorem*{remark*}{Remark}
\newtheorem*{remarks*}{Remarks}
\newtheorem*{proofidea*}{Idea of proof}
\newtheorem*{speculation*}{Speculation}

%\newenvironment{conjecture}[1][]{\textbf{Conjecture #1\hspace{.3em}}}{}
%\newenvironment{theorem*}[1]{\textbf{#1}\itshape \hspace{.3em}}{\upshape}
%\newenvironment{remark*}[1]{\textbf{#1}\itshape \hspace{.3em}}{\upshape}
%\newenvironment{corollary*}[1]{\textbf{#1}\itshape \hspace{.3em}}{\upshape}
%\newenvironment{proof}{\textbf{Proof\hspace{.3em}}}{}
%\newenvironment{proofs}{\textbf{Proofs\hspace{.3em}}}{}
%\newenvironment{proofsketch}{\textbf{Sketch of Proof\hspace{.3em}}}{}

%kp note: definition, theorem, proposition, lemma, example, corollary, remark, and question share numbering sequence

%\newtheorem{definition}{Definition}%[section] Numbering sequentially, no section number prepended.
\newtheorem{definition}{Definition}[section] %section number prepended.

\newtheorem{lemma}[definition]{Lemma}
\newtheorem{proposition}[definition]{Proposition}
%\newtheorem{example}[definition]{Example}
 % simple numbering
\newtheorem{corollary}[definition]{Corollary}
%\newtheorem{remark}[definition]{Remark}
 % simple numbering

%kp note: \binom{#1}{#2} gives stacked 2x1 array within parentheses

%kp note: define char, ord, differential

%kp note: define parenthetical mod with no "mod"
%\newcommand{\mod}[1]{\ensuremath{\quad(#1)}}
% use the amsmath command "\pod" instead
%("\mod" already defined in amsmath: gives just the word "mod")

%kp note: check

%kp note: shorthand for d_{\underbar{ }}( )

%\newcommand{\du}[1]{\ensuremath{d_{-}\!(#1)}}  % alternate approach

%kp note: define special O, P, Q, C

\newcommand{\newz}{\ensuremath{\mathbb{Z}}}
\newcommand{\newi}{\ensuremath{\bm{\mathrm{I}}}}

%%%%%%%%%%%%%%%%%%%%%%%%%%%%%%
\begin{document}

\begin{frontmatter}

% Title, authors and addresses

% use the thanksref command within \title, \author or \address for footnotes;
% use the corauthref command within \author for corresponding author footnotes;
% use the ead command for the email address,
% and the form \ead[url] for the home page:
% \title{Title\thanksref{label1}}
% \thanks[label1]{}
% \author{Name\corauthref{cor1}\thanksref{label2}}
% \ead{email address}
% \ead[url]{home page}
% \thanks[label2]{}
% \corauth[cor1]{}
% \address{Address\thanksref{label3}}
% \thanks[label3]{}

%\title{}

% use optional labels to link authors explicitly to addresses:
% \author[label1,label2]{}
% \address[label1]{}
% \address[label2]{}

%\author{}

\title{Frobenius' result on simple groups of order $\bm{\frac{p^{3}-p}{2}}$}
\author{Paul Monsky}

\address{Brandeis University, Waltham MA  02454-9110, USA\\  monsky@brandeis.edu }

\begin{abstract}
The complete list of pairs of non-isomorphic finite simple groups having the same order is well-known. In particular for $p>3$, $PSL_{2}(\newz/p)$ is the ``only'' simple group of order $\frac{p^{3}-p}{2}$. It's less well-known that Frobenius proved this uniqueness result in 1902. This note presents a version of Frobenius' argument that might be used in an undergraduate honors algebra course. It also includes a short modern proof, aimed at the same audience, of the much earlier result that $PSL_{2}(\newz/p)$ is simple for $p>3$; a result stated by Galois in 1832.
\end{abstract}
\maketitle
%\begin{keyword}
% keywords here, in the form: keyword \sep keyword

% PACS codes here, in the form: \PACS code \sep code
%\PACS 
%\end{keyword}
\end{frontmatter}

% main text

\section{Background}
\label{section1}

Let $p$ be a prime and $SL_{2}(\newz/p)$ be the group of 2 by 2 determinant 1 matrices with entries in $\newz/p$. The quotient, $PSL_{2}(\newz/p)$, of $SL_{2}(\newz/p)$ by $\{\pm \newi\}$ is for $p>2$ a group of order $\frac{p^{3}-p}{2}$. Galois \cite{2} introduced and studied this group; in his 1832 letter to Auguste Chevalier he says that it is easily shown to be simple for $p>3$. (There are many proofs of simplicity. I'll give a short one in section \ref{section6}.) In 1902 Frobenius \cite{1} classified certain transitive permutation groups on $p+1$ letters up to permutation isomorphism, and deduced as a corollary that $PSL_{2}(\newz/p)$ is the ``only'' simple group of order $\frac{p^{3}-p}{2}$.

Frobenius' proof of this very early result in the classification of the finite simple groups, though elementary, isn't well-known and hasn't found its way into textbooks. In this note I give a version of it, based on Sylow theory and the cyclicity of $(\newz/p)^{*}$. This version could perhaps be presented in an undergraduate honors algebra course. I thank Jim Humphreys for his close reading of this note, his encouragement, and his expository suggestions.

Another description of $PSL_{2}(\newz/p)$ will be useful. Let $V$ be the space of column vectors, $\left(\begin{smallmatrix}x\\y\end{smallmatrix}\right)$, with entries in $\newz/p$. $SL_{2}(\newz/p)$ acts on the set consisting of the $p+1$ one-dimensional subspaces of $V$. We identify this space with $\newz/p\cup\{\infty\}$ as follows. Given a subspace with generator $\left(\begin{smallmatrix}x\\y\end{smallmatrix}\right)$, map it to the element $z=\frac{x}{y}$ of $\newz/p\cup\{\infty\}$. Since $\left(\begin{smallmatrix}x\\y\end{smallmatrix}\right)$ is mapped to $\left(\begin{smallmatrix}x+y\\y\end{smallmatrix}\right)$ by $\left(\begin{smallmatrix}1 & 1\\0 & 1\end{smallmatrix}\right)$ and to $\left(\begin{smallmatrix}-y\\x\end{smallmatrix}\right)$ by $\left(\begin{smallmatrix}0 & -1\\1 & 0\end{smallmatrix}\right)$, the images of $\left(\begin{smallmatrix}1 & 1\\0 & 1\end{smallmatrix}\right)$ and $\left(\begin{smallmatrix}0 & -1\\1 & 0\end{smallmatrix}\right)$ are the translation $z\rightarrow z+1$, and the involution $z\rightarrow -\frac{1}{z}$. Easy arguments with row and column operations show that $\left(\begin{smallmatrix}1 & 1\\0 & 1\end{smallmatrix}\right)$ and $\left(\begin{smallmatrix}0 & -1\\1 & 0\end{smallmatrix}\right)$ generate $SL_{2}(\newz/p)$. Since the kernel of the action of $SL_{2}(\newz/p)$ consists of $\newi$ and $-\newi$, $PSL_{2}(\newz/p)$ identifies with the transitive group of permutations of $\newz/p\cup\{\infty\}$ generated by $z\rightarrow z+1$ and $z\rightarrow -\frac{1}{z}$.

I'll prove the following (version of a) result of Frobenius, and its easy corollary:

\begin{ctheorem}
Let $p\ne 2$ be prime and $G$ be a transitive group of permutations of $\newz/p\cup\{\infty\}$. Suppose $|G|=\frac{p^{3}-p}{2}$, and that $G$ contains the translations. Then one of the following holds:
\begin{enumerate}
\item[(a)] $z\rightarrow -\frac{1}{z}$ is in $G$. (In this case the description of $PSL_{2}(\newz/p)$ given above and the fact that $|G|=|PSL_{2}(\newz/p)|$ tell us that $G$ is generated by $z\rightarrow z+1$ and $z\rightarrow -\frac{1}{z}$, and is permutation-isomorphic to $PSL_{2}(\newz/p)$ in its action on the 1-dimensional subspaces of $V$.)
\item[(b)] $p=7$ and $G$ contains the involution $(0 \infty)(1 3)(2 6)(4 5)$ or the involution $(0 \infty)(1 5)(2 3)(4 6)$.  In these cases $G$ is generated by $z\rightarrow z+1$, $z\rightarrow 2z$, and the involution, and has a normal subgroup of order 8.
\end{enumerate}
\end{ctheorem}

\begin{corollary*}
When $p>3$, $PSL_{2}(\newz/p)$ is, up to isomorphism, the only simple group of order $\frac{p^{3}-p}{2}$.
\end{corollary*}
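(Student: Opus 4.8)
The plan is to deduce the Corollary from the Classification Theorem by realizing an abstract simple group of the given order as a transitive permutation group of $\newz/p\cup\{\infty\}$ that contains the translations. So let $G$ be a simple group with $|G|=\frac{p^{3}-p}{2}=\frac{(p-1)\,p\,(p+1)}{2}$ and $p>3$. First I would determine the Sylow structure at $p$: since $p\nmid p^{2}-1$, the exact power of $p$ dividing $|G|$ is $p^{1}$, so a Sylow $p$-subgroup $P$ is cyclic of order $p$. Let $n_{p}$ be the number of Sylow $p$-subgroups. Then $n_{p}\equiv 1\pmod p$, and since $P\le N_{G}(P)$ we have $n_{p}=[G:N_{G}(P)]\mid[G:P]=\frac{(p-1)(p+1)}{2}$. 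Simplicity forces $n_{p}>1$, for otherwise $P$ would be a proper nontrivial normal subgroup of $G$.

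The key arithmetic step, and the one I expect to need the most care, is to show that $n_{p}=p+1$ exactly. I would prove that the only divisors of $\frac{(p-1)(p+1)}{2}$ congruent to $1\pmod p$ are $1$ and $p+1$. Write such a divisor as $d=kp+1$. From $kp\equiv-1\pmod d$ we get $k^{2}p^{2}\equiv 1$, hence $k^{2}(p^{2}-1)\equiv k^{2}-1\pmod d$; since $d\mid p^{2}-1$ this gives $d\mid k^{2}-1$. For $k\ge 2$ the bound $d\le\frac{p^{2}-1}{2}$ forces $k<p/2$, so $kp+1>k^{2}-1\ge 0$, which is incompatible with $d\mid k^{2}-1$. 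Thus only $k=0$ and $k=1$ survive, giving $d\in\{1,p+1\}$; with $n_{p}>1$ this yields $n_{p}=p+1$.

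Next I would let $G$ act by conjugation on its $p+1$ Sylow $p$-subgroups. This action is transitive, and its kernel is a normal subgroup of $G$ that is not all of $G$ (the action is nontrivial because $n_{p}>1$), so by simplicity it is faithful; thus $G$ becomes a transitive permutation group on a set $\Omega$ of $p+1$ points. A fixed Sylow subgroup $P$ fixes the point $Q\in\Omega$ exactly when $P\le N_{G}(Q)$, and this forces $Q=P$, because $Q\trianglelefteq N_{G}(Q)$ makes $Q$ the unique Sylow $p$-subgroup of $N_{G}(Q)$, which the order-$p$ subgroup $P\le N_{G}(Q)$ must then equal. So $P$ fixes just one point, and since its remaining orbits have size dividing $p$ it acts as a single $p$-cycle on the other $p$ points. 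Identifying $\Omega$ with $\newz/p\cup\{\infty\}$ by sending the fixed point to $\infty$ and labeling the $p$-cycle so that a generator of $P$ becomes $z\rightarrow z+1$, the subgroup $P$ becomes exactly the group of translations, and $G$ now satisfies every hypothesis of the Classification Theorem.

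Finally I would apply that theorem. Case (b) would give $p=7$ together with a normal subgroup of $G$ of order $8$; since $1<8<|G|=168$ this contradicts simplicity, so (b) cannot occur. Hence case (a) holds, $z\rightarrow-\frac{1}{z}\in G$, and $G$ is permutation-isomorphic, and therefore isomorphic, to $PSL_{2}(\newz/p)$. Combined with the simplicity of $PSL_{2}(\newz/p)$ for $p>3$ proved in Section \ref{section6}, this identifies $PSL_{2}(\newz/p)$ as the unique simple group of order $\frac{p^{3}-p}{2}$ up to isomorphism. All the real difficulty lives in the Classification Theorem itself; within this deduction the only genuine obstacle is the divisibility argument pinning down $n_{p}=p+1$, after which the identification with the translations and the elimination of case (b) are routine.
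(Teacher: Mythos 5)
Your proposal is correct and follows essentially the same route as the paper: count the Sylow $p$-subgroups to get $n_p=p+1$, let $G$ act faithfully and transitively on them, label the points so that a generator of a Sylow subgroup becomes $z\rightarrow z+1$, and then invoke the Classification Theorem, ruling out case (b) because a simple group of order $168$ has no normal subgroup of order $8$. The only (cosmetic) difference is in pinning down $n_p$: the paper observes that $\frac{p^{2}-1}{mp+1}$ is a positive integer $\equiv-1\pmod p$ and hence at least $p-1$, while you derive $d\mid k^{2}-1$ and use a size bound --- both are equally valid instances of the same Sylow-counting argument.
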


The theorem is trivial when $p=3$. For now $|G|=12$ and $G$ is a permutation group on 4 elements. So $G$ consists of the even permutations, and thus contains $(0 \infty)(1 2)$. In the following sections we prove the theorem for $p>3$, but now we show how the corollary follows. Suppose $G$ is a simple group of order $\frac{p^{3}-p}{2}$ with $p>3$. Then $p$ divides $|G|$ and $G$ has $mp+1$ $p$-Sylow subgroups; since $G$ is simple $m>0$. Furthermore $\frac{p^{2}-1}{mp+1}$ is an integer $\equiv -1\pod{p}$ and so is $\ge p-1$; it follows that $m=1$. $G$ acts on the set $S$ consisting of the $p+1$ $p$-Sylows, and by Sylow theory the action is transitive. Since $G$ is simple, the action is faithful. Select an element $\sigma$ of $G$ of order $p$. This element acts by a $p$-cycle on $S$; denote the element it fixes by $\infty$. We may label the remaining elements of $S$ with tags in $\newz/p$ so that $\sigma$ is the translation $(0 1 \cdots p-1)$, $z\rightarrow z+1$, of $\newz/p\cup\{\infty\}$. If we view $G$ as a group of permutations of $\newz/p\cup\{\infty\}$, the hypotheses of the classification theorem are satisfied. Since $G$ has no normal subgroup of order 8, we're in the situation of (a), and we conclude that $G$ is isomorphic to $PSL_{2}(\newz/p)$.

\section{Easy facts about $\bm{G}$}

For the rest of this note $G$ is a group satisfying the hypotheses of the classification theorem. Then the stabilizer, $G_{\infty}$, of $\infty$ in $G$ contains the translations and so is transitive on $\newz/p$. Consequently:

\begin{lemma}
\label{lemma2.1}
$G$ is doubly transitive on $\newz/p\cup\{\infty\}$.
\end{lemma}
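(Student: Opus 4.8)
The plan is to deduce double transitivity from the standard criterion: a transitive permutation group $G$ on a set $X$ is doubly transitive precisely when, for one (equivalently, every) point $x\in X$, the stabilizer $G_x$ acts transitively on $X\setminus\{x\}$. The transitivity of $G$ on $\newz/p\cup\{\infty\}$ is part of the hypotheses, and the remark immediately preceding the lemma supplies exactly the missing ingredient: since $G_{\infty}$ contains all the translations $z\rightarrow z+c$, it is already transitive on $\newz/p=(\newz/p\cup\{\infty\})\setminus\{\infty\}$. So the entire substance of the lemma has been assembled, and what remains is to record the elementary conjugation argument that turns these two facts into double transitivity.

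To carry this out, I would take two ordered pairs $(a,b)$ and $(c,d)$ of distinct elements of $\newz/p\cup\{\infty\}$ and produce a $g\in G$ sending the first pair to the second. First, using transitivity of $G$, I would choose $h_{1},h_{2}\in G$ with $h_{1}(a)=\infty$ and $h_{2}(c)=\infty$. Because $a\ne b$ and $c\ne d$, the images $h_{1}(b)$ and $h_{2}(d)$ both lie in $\newz/p$. Then, invoking the transitivity of $G_{\infty}$ on $\newz/p$, I would pick $k\in G_{\infty}$ with $k\bigl(h_{1}(b)\bigr)=h_{2}(d)$. The composite $g=h_{2}^{-1}kh_{1}$ then fixes the prescribed data: it sends $a\mapsto h_{2}^{-1}(k(\infty))=h_{2}^{-1}(\infty)=c$, and $b\mapsto h_{2}^{-1}(h_{2}(d))=d$. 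This exhibits the required element and establishes double transitivity.

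There is no genuine obstacle here; the lemma is essentially a bookkeeping consequence of the preceding observation, and the only point requiring the smallest care is checking that the images of $b$ and $d$ really do land in $\newz/p$ (so that the transitivity of $G_{\infty}$ applies), which follows immediately from $a\ne b$, $c\ne d$, and the fact that $h_{1},h_{2}$ are bijections sending $a,c$ to $\infty$. The value of the lemma lies not in its difficulty but in the fact that double transitivity, once recorded, will be the engine driving the finer structural analysis of $G$ in the sections to follow.
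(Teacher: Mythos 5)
Your proposal is correct and follows the same route as the paper: the paper simply observes that $G_{\infty}$ contains the translations, hence is transitive on $\newz/p$, and concludes double transitivity by the standard stabilizer criterion, which you have merely spelled out in full. Your explicit conjugation argument $g=h_{2}^{-1}kh_{1}$ is a valid verification of that criterion and checks out.
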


\begin{definition}
\label{def2.2}
$K$ is the subgroup of $G$ consisting of elements fixing 0 and $\infty$. $\bar{K}$ consists of the elements of $G$ interchanging 0 and $\infty$. $H=K\cup \bar{K}$ is the stabilizer of $\{0,\infty\}$ in $G$.
\end{definition}

Since $G$ is doubly transitive, $|K|=\frac{|G|}{p(p+1)}=\frac{p-1}{2}$. Double transitivity also shows that $\bar{K}$ is non-empty. So $K$ is of index 2 in $H$, and $|\bar{K}|=|K|=\frac{p-1}{2}$.

\begin{definition}
\label{def2.3}
$R$ is the set of squares in $(\newz/p)^{*}$, $N$ the set of non-squares.
\end{definition}

Since $(\newz/p)^{*}$ is cyclic, so is $R$. Furthermore, $|R|=|N|=\frac{p-1}{2}$.

\begin{lemma}
\label{lemma2.4}
\hspace{2em} % to force linewrap
\begin{enumerate}
\item[(1)] $K$ is cyclic and consists of the maps $z\rightarrow az$, $a$ in $R$.
\item[(2)] No element of $G$ fixes more than 2 letters.
\end{enumerate}
\end{lemma}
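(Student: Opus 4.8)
The plan is to pin down the structure of $K$ first, since part (2) will follow almost immediately once part (1) is known. I would start from the stabilizer $G_{\infty}$ of $\infty$, which has order $\frac{|G|}{p+1}=\frac{p(p-1)}{2}$ and contains the group $T$ of translations $z\rightarrow z+c$, a subgroup of order $p$. Since $\frac{p-1}{2}<p$, the number of $p$-Sylow subgroups of $G_{\infty}$ is $\equiv 1\pmod{p}$ and divides $\frac{p-1}{2}$, which forces it to be $1$. Hence $T$ is the unique $p$-Sylow of $G_{\infty}$ and is normal there.

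Because $K\subseteq G_{\infty}$ normalizes $T$, conjugation gives an action of $K$ on $T\cong(\newz/p,+)$ by automorphisms. The key computation is that for $\phi\in K$ and the translation $t_{c}:z\rightarrow z+c$, evaluating $\phi t_{c}\phi^{-1}$ at $0$ and using $\phi(0)=0$ shows $\phi t_{c}\phi^{-1}=t_{\phi(c)}$. Thus the automorphism of $(\newz/p,+)$ induced by $\phi$ is exactly $c\mapsto\phi(c)$, which must be multiplication by some $\lambda_{\phi}\in(\newz/p)^{*}$. Consequently every $\phi\in K$ is the map $z\rightarrow\lambda_{\phi}z$, and $\phi\mapsto\lambda_{\phi}$ is an injective homomorphism of $K$ into the cyclic group $(\newz/p)^{*}$. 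Since $|K|=\frac{p-1}{2}$ and a cyclic group of order $p-1$ has a unique subgroup of that order, namely $R$, I would conclude that $K$ is cyclic and equals $\{z\rightarrow az:a\in R\}$, which is (1).

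For (2), suppose toward a contradiction that a non-identity $\sigma\in G$ fixes three distinct letters. Using double transitivity (Lemma~\ref{lemma2.1}), I would conjugate $\sigma$ by an element carrying two of its fixed letters to $0$ and $\infty$; conjugation preserves the number of fixed points, so the conjugate lies in $K$ and still fixes some third letter $c\in(\newz/p)^{*}$. By (1) the conjugate is $z\rightarrow az$ with $a\in R$, and $ac=c$ forces $a=1$. Then the conjugate, and hence $\sigma$ itself, is the identity, a contradiction.

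The only real obstacle is establishing that the elements of $K$ are genuine multiplication maps; everything rests on the normality of $T$ in $G_{\infty}$ (a Sylow count) together with the observation that the conjugate $\phi t_{c}\phi^{-1}$ is pinned down by the single value $\phi(c)$. Once (1) is in hand, (2) is a one-line consequence.
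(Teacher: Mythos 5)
Your proposal is correct and follows essentially the same route as the paper: Sylow counting makes the translation group normal in $G_{\infty}$, conjugation then forces each element of $K$ to be multiplication by a unit, and the unique index-2 subgroup of the cyclic group $(\newz/p)^{*}$ identifies $K$ with $\{z\rightarrow az : a\in R\}$, after which (2) follows by conjugating two fixed letters to $0$ and $\infty$. The only cosmetic difference is that you extract the multiplier from the conjugation action on translations, while the paper reads it off from the functional equation $\tau(z+1)=\tau(z)+a$.
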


\begin{proof}
$|G_{\infty}|=\frac{|G|}{p+1}=p\left(\frac{p-1}{2}\right)$. The Sylow theorems then show that the group of translations is the unique $p$-Sylow subgroup of $G_{\infty}$, and so is normal in $G_{\infty}$. So for $\tau$ in $G_{\infty}$, $\tau\circ (z\rightarrow z+1)=(z\rightarrow z+a)\circ\tau$ for some a in $(\newz/p)^{*}$.  If $\tau$ is in $K$, $\tau(0)=0$. Since $\tau(z+1)=\tau{z}+a$, $\tau(z)=az$ for all $z$ in $\newz/p$. Now the maps $z\rightarrow az$, $a$ in $(\newz/p)^{*}$, form a cyclic group of order $p-1$. Since $K$ is a subgroup of that group of order $\frac{p-1}{2}$ we get (1).

Suppose next that $\tau\ne e$ fixes 3 or more letters. By double transitivity we may assume that 2 of these letters are 0 and $\infty$, so that $\tau$ is in $K$. But the only map $z\rightarrow az$ fixing a third letter is $e$.
\end{proof}

\begin{lemma}
\label{lemma2.5}
Suppose $\tau\in\bar{K}$.
\begin{enumerate}
\item[(1)] If $p\equiv 1\pod{4}$, $-1\in R$ and $\tau$ stabilizes $R$ and $N$.
\item[(2)] If $p\equiv 3\pod{4}$, $-1\in N$ and $\tau$ interchanges $R$ and $N$.
\end{enumerate}
\end{lemma}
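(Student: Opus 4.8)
The plan is to separate the two assertions. The claim that $-1\in R$ exactly when $p\equiv1\pod4$ is pure number theory: since $(\newz/p)^{*}$ is cyclic of even order $p-1$, its unique element of order $2$ is $-1$, and writing $-1=g^{(p-1)/2}$ for a generator $g$ shows $-1\in R=\langle g^{2}\rangle$ iff $(p-1)/2$ is even, i.e. iff $p\equiv1\pod4$. So I would dispose of the statements about $-1$ first and then concentrate on the action of $\tau$.

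For the action of $\tau$ I would first record a dichotomy. Since $K$ is the kernel of the map $H\to\mathrm{Sym}\{0,\infty\}$ it is normal of index $2$ in $H$, so $\tau\in\bar{K}\subseteq H$ normalizes $K$ and therefore permutes the two $K$-orbits on $(\newz/p)^{*}$, which by Lemma \ref{lemma2.4} are exactly $R$ and $N$. Hence $\tau$ either stabilizes each of $R,N$ or interchanges them. This behaviour is moreover the same for every element of $\bar{K}$: writing $k_{a}$ for the map $z\to az$ ($a\in R$) and $\bar{K}=\tau K$, and noting $k_{a}(R)=aR=R$ and $k_{a}(N)=N$, we get $(\tau k_{a})(R)=\tau(R)$. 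Finally I would isolate the key reduction: if $\tau$ interchanges $R$ and $N$ then $\tau$ can fix no point $w\in(\newz/p)^{*}$, since $w\in R$ would force $w\in\tau(R)=N$. Equivalently, a single fixed point in $(\newz/p)^{*}$ already forces $\tau$ to stabilize both $R$ and $N$.

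Then I would split on $p\bmod4$. For $p\equiv3\pod4$ I want a fixed-point-free involution in $\bar{K}$. Here $|H|=p-1\equiv2\pod4$ while $|K|=\tfrac{p-1}{2}$ is odd, so by Cauchy $H$ contains an involution, which must lie in $\bar{K}$. On the other hand $|G_{\infty}|=p\cdot\tfrac{p-1}{2}$ is odd, hence no point stabilizer (being conjugate to $G_{\infty}$) has even order; so no involution of $G$ can fix a point, and our involution $\tau$ is fixed-point-free on $(\newz/p)^{*}$. Since $|R|=\tfrac{p-1}{2}$ is odd, a fixed-point-free involution cannot pair $R$ with itself, so $\tau$ must interchange $R$ and $N$; by the uniformity above every element of $\bar{K}$ then does. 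For $p\equiv1\pod4$ I would instead manufacture an element of $\bar{K}$ with a fixed point. Now $-1\in R$, so $k_{-1}\colon z\to -z$ lies in $K$ and interchanges $1$ and $-1$; choosing $g\in G$ with $g(1)=0$ and $g(-1)=\infty$ (possible by double transitivity, Lemma \ref{lemma2.1}), the conjugate $\sigma=gk_{-1}g^{-1}$ sends $0\to\infty$ and $\infty\to0$, so $\sigma\in\bar{K}$, while it fixes $g(0)$, which cannot be $0$ or $\infty$ and so lies in $(\newz/p)^{*}$. By the reduction $\sigma$ stabilizes $R$ and $N$, and hence so does every element of $\bar{K}$.

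The main obstacle is precisely the determination of ``stabilize versus interchange''. Purely formal relations --- that $\tau^{2}\in K$, that $\tau$ normalizes $K$, that $\bar{K}$ is a single coset --- are consistent with both possibilities, so soft counting alone cannot decide the case. What actually breaks the symmetry is the interplay between the parity of $\tfrac{p-1}{2}$ and the order of a point stabilizer, realized through the presence or absence of suitable involutions; this is the step I expect to require the most care, and it is also why the two residue classes genuinely call for different constructions.
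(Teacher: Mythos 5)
Your proof is correct. For $p\equiv 1\pmod{4}$ it is essentially the paper's argument: both of you conjugate the element $z\rightarrow -z$ of $K$ so that one of its size-$2$ orbits lands on $\{0,\infty\}$, producing an element of $\bar{K}$ with a fixed letter in $(\newz/p)^{*}$, which therefore cannot interchange $R$ and $N$; the coset argument ($\bar{K}=\tau K$ and $K$ preserves $R$ and $N$) then spreads this to all of $\bar{K}$. For $p\equiv 3\pmod{4}$ your route is genuinely different. The paper argues by contradiction: if the conclusion fails it produces a $\lambda\in\bar{K}$ with $\lambda(1)=1$, observes that $\lambda\circ\lambda$ fixes $0$, $\infty$ and $1$ so that $\lambda$ is an involution fixing $1$, uses the parity of the number of remaining letters to extract a second fixed letter, and then conjugates $\lambda$ into $K$, contradicting the oddness of $|K|=\frac{p-1}{2}$. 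You instead argue directly: Cauchy's theorem gives an involution in $H$, which must lie in $\bar{K}$ because $|K|$ is odd; the oddness of $|G_{\infty}|=p\cdot\frac{p-1}{2}$ shows that no involution of $G$ can fix any letter; and a fixed-point-free involution cannot preserve $R$ because $|R|=\frac{p-1}{2}$ is odd. Both arguments hinge on the same parity of $\frac{p-1}{2}$, but yours trades the paper's fixed-letter count and conjugation into $K$ for Cauchy's theorem plus the order of a point stabilizer; it is somewhat more structural and avoids the proof by contradiction, at the harmless cost of invoking Cauchy's theorem.
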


\begin{proof}
By Lemma \ref{lemma2.4} (1), the orbits of $K$ acting on $(\newz/p)^{*}$ are $R$ and $N$. Since $\tau$ normalizes $K$ it permutes these orbits. Suppose first that $p\equiv 1\pod{4}$. Then $|R|=\frac{p-1}{2}$ is even and $R$ contains an element of $(\newz/p)^{*}$ of order 2, which must be $-1$. Furthermore by Lemma \ref{lemma2.4} (1), $z\rightarrow -z$ is in $G$ and has an orbit $(u,v)$ of size 2. By double transitivity some conjugate, $\lambda$, of this element lies in $\bar{K}$, and, like $z\rightarrow -z$, fixes 2 letters. Such a $\lambda$ cannot possibly interchange $R$ and $N$. So it stabilizes $R$ and $N$, and since $\bar{K}$ is a coset of $K$ in $H$, the same is true of all $\tau$ in $\bar{K}$. Suppose next that $p\equiv 3\pod{4}$. Then $|R|=\frac{p-1}{2}$ is odd, so $-1$ cannot be in $R$. If the lemma fails there is a $\tau$ in $\bar{K}$ with $\tau(1)$ in $R$. Since $\bar{K}$ is a coset of $K$ in $H$ there is a $\lambda$ in $\bar{K}$ with $\lambda(1)=1$. Then $\lambda\circ\lambda$ fixes the letters 0, $\infty$ and 1. By Lemma \ref{lemma2.4} (2), $\lambda$ has order 2 and fixes 1. Since $\lambda$ is a product of disjoint 2-cycles, $\lambda$ must fix a second letter as well. By double transitivity some conjugate of $\lambda$ is an order 2 element of $K$. But $|K|=\frac{p-1}{2}$ is odd.
\end{proof}

\begin{lemma}
\label{lemma2.6}
Suppose $\tau\in\bar{K}$. Then there is an $n$ such that whenever $z$ is in $(\newz/p)^{*}$ and $a$ is in $R$, $\tau(az)=a^{n}\tau(z)$. Furthermore $\frac{p-1}{2}$ divides $n^{2}-1$.
\end{lemma}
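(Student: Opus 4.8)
The plan is to exploit the fact that $\tau$ normalizes $K$ together with the cyclicity of $K$. Since $K$ has index $2$ in $H$, it is normal in $H$, so conjugation by $\tau$ is an automorphism of $K$. By Lemma~\ref{lemma2.4}(1), $K$ is the cyclic group $\{m_a : a\in R\}$, where $m_a$ denotes the map $z\mapsto az$, and $a\mapsto m_a$ is an isomorphism of the cyclic group $R$ onto $K$. Every automorphism of a cyclic group is a power map, so there is an integer $n$ with $\tau\, m_a\, \tau^{-1}=m_{a^{n}}$ for all $a\in R$.

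Next I would unwind this conjugation relation. Because $\tau$ interchanges $0$ and $\infty$, it permutes the remaining letters $(\newz/p)^{*}$ among themselves, so $\tau(z)\in(\newz/p)^{*}$ whenever $z\in(\newz/p)^{*}$. Writing $\tau\, m_a\,\tau^{-1}=m_{a^{n}}$ as the operator identity $\tau(a\,\tau^{-1}(w))=a^{n}w$ and substituting $w=\tau(z)$ yields exactly $\tau(az)=a^{n}\tau(z)$ for all $z\in(\newz/p)^{*}$ and $a\in R$, which is the first assertion.

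For the divisibility, the key observation is that $\tau^{2}\in K$: since $\bar K$ is a coset of the index-$2$ subgroup $K$, the product of two elements of $\bar K$ lands in $K$. Now conjugation by $\tau$, viewed as an automorphism of $K$, is the power map $m_a\mapsto m_{a^{n}}$, so conjugation by $\tau^{2}$ is the power map $m_a\mapsto m_{a^{n^{2}}}$. On the other hand $\tau^{2}\in K$ and $K$ is abelian, so conjugation by $\tau^{2}$ is trivial on $K$. Hence $a^{n^{2}}=a$ for every $a\in R$; taking $a$ to be a generator of the cyclic group $R$ of order $\frac{p-1}{2}$ gives $a^{n^{2}-1}=1$, so $\frac{p-1}{2}$ divides $n^{2}-1$.

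I do not expect a genuine obstacle here; the only point requiring slight care is the bookkeeping in the second step, ensuring the substitution $w=\tau(z)$ is legitimate (it is, since $\tau$ restricts to a permutation of $(\newz/p)^{*}$). The heart of the matter is simply that conjugation by $\tau$ is an automorphism of order dividing $2$ of the cyclic group $K$, which forces $n^{2}\equiv 1$ modulo $|K|=\frac{p-1}{2}$.
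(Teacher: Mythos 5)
Your proof is correct and follows essentially the same route as the paper's: conjugation by $\tau$ is an automorphism of the cyclic group $K$, hence a power map $\sigma\mapsto\sigma^{n}$, which unwinds to $\tau(az)=a^{n}\tau(z)$, and the square of this automorphism is trivial, forcing $\frac{p-1}{2}\mid n^{2}-1$. You merely supply a bit more justification than the paper does (explicitly noting $\tau^{2}\in K$ with $K$ abelian), which is a welcome addition rather than a divergence.
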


\begin{proof}
$\tau$ normalizes $K$. So $\sigma\rightarrow\tau\sigma\tau^{-1}$ is an automorphism of $K$ which is of the form $\sigma\rightarrow\sigma^{n}$ since $K$ is cyclic. Then $\tau\circ(z\rightarrow az)=(z\rightarrow a^{n}z)\circ\tau$, giving the first result. Since the square of the automorphism is the identity, $\frac{p-1}{2}$ divides $n^{2}-1$.
\end{proof}

\begin{remark*}
$n$ is prime to $\frac{p-1}{2}$. So when $p\equiv 1\pod{4}$, $n$ is odd. We're free to modify $n$ by $\frac{p-1}{2}$, and so when $p\equiv 3\pod{4}$ we may (and shall) assume that $n$ is odd as well.
\end{remark*}

\section{The case $\bm{p\equiv 1\pod{4}}$}
\label{section3}

In this section $p\equiv 1\pod{4}$. Our first goal is to show that the $n$ of Lemma \ref{lemma2.6} can be chosen to be $-1$.

\begin{definition}
\label{def3.1}
$X$ is the set of pairs whose first element is a $\tau$ in $G$, and whose second element is a size 2 orbit $\{ u,v\}$ of $\tau$.
\end{definition}

\begin{lemma}
\label{lemma3.2}
$|X|=\left(\frac{p^{2}+p}{2}\right)\left(\frac{p-1}{2}\right)$.
\end{lemma}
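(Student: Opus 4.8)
The plan is to count $X$ by fibering the projection $(\tau,\{u,v\})\mapsto\{u,v\}$ over the collection of unordered pairs of distinct elements of $\newz/p\cup\{\infty\}$. Since that set has $p+1$ elements, there are $\binom{p+1}{2}=\frac{p^{2}+p}{2}$ such pairs. The fibre over a pair $P=\{u,v\}$ consists of those $\tau\in G$ for which $P$ is a size-2 orbit; I would first record that this is the same as requiring $\tau(u)=v$ and $\tau(v)=u$. Indeed, if $\{u,v\}$ is an orbit of size $2$ under $\langle\tau\rangle$ then $\tau(u)$ lies in $\{u,v\}$ and cannot equal $u$ (else the orbit of $u$ would be a singleton), so $\tau(u)=v$, and symmetrically $\tau(v)=u$; the converse is clear. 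Thus it suffices to show every pair is interchanged by exactly $\frac{p-1}{2}$ elements of $G$.

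First I would treat the distinguished pair $\{0,\infty\}$. By Definition \ref{def2.2} the elements of $G$ interchanging $0$ and $\infty$ are precisely those of $\bar K$, and we have already noted that $|\bar K|=\frac{p-1}{2}$. To transfer this count to an arbitrary pair $P=\{u,v\}$, I would invoke double transitivity (Lemma \ref{lemma2.1}) to choose $g\in G$ with $g(0)=u$ and $g(\infty)=v$. Then $\tau\mapsto g\tau g^{-1}$ carries $\bar K$ bijectively onto the set of elements interchanging $u$ and $v$: a direct check shows $g\tau g^{-1}$ sends $u\mapsto v$ and $v\mapsto u$ exactly when $\tau$ interchanges $0$ and $\infty$. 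Hence each fibre has size $\frac{p-1}{2}$.

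Summing over the $\frac{p^{2}+p}{2}$ pairs then yields $|X|=\left(\frac{p^{2}+p}{2}\right)\left(\frac{p-1}{2}\right)$, as claimed. I do not expect a serious obstacle here; the argument is a routine fibering, and the only points needing care are the identification of a size-2 orbit with an interchanged pair and the verification that conjugation biject interchanging elements of one pair onto those of another. It is worth flagging that this count uses neither the hypothesis $p\equiv 1\pod{4}$ of the present section nor Lemma \ref{lemma2.6}; those enter only in the subsequent exploitation of $X$.
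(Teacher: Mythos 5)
Your proposal is correct and follows essentially the same route as the paper: count the $\frac{p^{2}+p}{2}$ two-element subsets, use double transitivity to reduce the fibre count to the pair $\{0,\infty\}$, and identify that fibre with $\bar K$, which has $\frac{p-1}{2}$ elements. The extra details you supply (the equivalence of ``size-2 orbit'' with ``interchanged pair'' and the explicit conjugation bijection) are just the steps the paper leaves implicit.
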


\begin{proof}
The number of size 2 subsets of $\newz/p\cup\{\infty\}$ is $\frac{p^{2}+p}{2}$. We prove the lemma by showing that for each such subset $\{ u,v\}$ there are exactly $\frac{p-1}{2}$ elements of $G$ having $\{ u,v\}$ as an orbit. By double transitivity we may assume $\{ u,v\}=\{ 0,\infty\}$. But $\tau$ has $\{ 0,\infty\}$ as an orbit precisely when $\tau$ is in $\bar{K}$.
\end{proof}

\begin{lemma}
\label{lemma3.3}
Every element of $\bar{K}$ has order 2.
\end{lemma}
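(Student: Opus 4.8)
The plan is to double count the set $X$ of Definition \ref{def3.1} by recording, for each $\tau\in G$, the number $c_{2}(\tau)$ of its size-$2$ orbits, so that $|X|=\sum_{\tau\in G}c_{2}(\tau)$. Lemma \ref{lemma3.2} evaluates this to $\left(\frac{p^{2}+p}{2}\right)\left(\frac{p-1}{2}\right)=\frac{p(p+1)(p-1)}{4}=\frac{|G|}{2}$. The structural observation that drives everything is this: if $\tau$ has two distinct size-$2$ orbits $\{u,v\}$ and $\{u',v'\}$, then $\tau^{2}$ fixes all four letters $u,v,u',v'$, so by Lemma \ref{lemma2.4}(2) we must have $\tau^{2}=e$. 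Hence only involutions can have more than one size-$2$ orbit, and every $\tau\ne e$ that is not an involution contributes at most $1$ to the sum.

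Next I would pin down the involutions fixing exactly two letters. Since $p\equiv 1\pod{4}$, the cyclic group $K$ has even order $\frac{p-1}{2}$, so it contains a unique involution; by Lemma \ref{lemma2.5}(1) this is $z\rightarrow -z$, which fixes exactly $0$ and $\infty$. By double transitivity each of the $\frac{p(p+1)}{2}$ size-$2$ subsets $\{u,v\}$ is the fixed-point set of exactly one involution, and distinct subsets give distinct involutions. Each such involution is a product of $\frac{p-1}{2}$ transpositions, hence contributes $\frac{p-1}{2}$ to $\sum_{\tau}c_{2}(\tau)$. Their total contribution is already $\frac{p(p+1)}{2}\cdot\frac{p-1}{2}=\frac{|G|}{2}=|X|$.

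Now I would finish by nonnegativity. The only remaining contributors to $\sum_{\tau}c_{2}(\tau)$ are the fixed-point-free involutions (each a product of $\frac{p+1}{2}$ transpositions) and the non-involutions having a single size-$2$ orbit; both contribute nonnegatively. Since the two-fixed-point involutions already account for the entire total $|X|$, every other contribution must vanish. In particular there is no non-involution having $\{0,\infty\}$ as a size-$2$ orbit; but these are exactly the non-involutions in $\bar{K}$, since any $\tau\in\bar{K}$ already has $\{0,\infty\}$ as a size-$2$ orbit. Hence $\bar{K}$ consists entirely of involutions, which is the assertion. As a byproduct the same vanishing shows $G$ has no fixed-point-free involution.

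I expect the main obstacle to be the exact bookkeeping of the middle step, namely verifying that the two-fixed-point involutions alone saturate $|X|$. For this it is essential that $p\equiv 1\pod{4}$ forces $K$ to have even order, so that a genuine involution $z\rightarrow -z$ lives in $K$, and that Lemma \ref{lemma2.4}(2) simultaneously guarantees each such involution fixes exactly two letters while each non-involution has at most one size-$2$ orbit. Once the saturation is recognized, the nonnegativity argument is immediate.
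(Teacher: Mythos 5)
Your proof is correct and is essentially the paper's argument: both double-count the set $X$ against Lemma \ref{lemma3.2} and conclude that the involutions with two fixed points saturate $X$, so any element of $\bar{K}$ (having $\{0,\infty\}$ as a size-$2$ orbit) must be one of them. The only cosmetic difference is how the $\frac{p^{2}+p}{2}$ such involutions are produced---the paper bounds the number of conjugates of $z\rightarrow -z$ via its centralizer, while you attach one involution to each $2$-element subset by double transitivity---but these yield the same count of the same elements.
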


\begin{proof}
Since $-1$ is in $R$, $\tau : z\rightarrow -z$ is in $K$. The letters fixed by $\tau$ are 0 and $\infty$; it follows that every element of $G$ commuting with $\tau$ stabilizes the set $\{ 0,\infty\}$ and lies in $H$. So the centralizer of $\tau$ in $G$ has order at most $|H|=p-1$, and the number of conjugates of $\tau$  is at least  $\frac{|G|}{p-1}=\frac{p^{2}+p}{2}$. Call these conjugates $\tau_{i}$. Like $\tau$, each $\tau_{i}$ has $\frac{p-1}{2}$ orbits of size 2. So the number of pairs whose first element is some $\tau_{i}$ and whose second is an orbit of that $\tau_{i}$ is at least $\left(\frac{p^{2}+p}{2}\right)\cdot\left(\frac{p-1}{2}\right)$. By Lemma \ref{lemma3.2} these pairs exhaust $X$. Suppose now that $\lambda$ is in $\bar{K}$. Then $\lambda$ has a size 2 orbit, $\{ 0,\infty\}$, and so must be some $\tau_{i}$, proving the lemma.
\end{proof}

\begin{corollary}
\label{corollary3.4}
The $n$ of Lemma \ref{lemma2.6} can be taken to be $-1$.
\end{corollary}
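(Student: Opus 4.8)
The value $n$ in Lemma~\ref{lemma2.6} records how a given $\tau\in\bar K$ acts by conjugation on the cyclic group $K$; since $K$ is abelian and $\bar K=K\tau$ is a single coset, replacing $\tau$ by $\sigma\tau$ with $\sigma\in K$ does not change $n$, so $n$ is really an invariant of $H$ rather than of the chosen $\tau$. The plan is therefore not to re-choose $\tau$, but to extract the value of $n$ from the extra fact now available when $p\equiv 1\pmod 4$: by Lemma~\ref{lemma3.3} \emph{every} element of $\bar K$ is an involution. The trick is to apply this not to $\tau$ itself but to the product of $\tau$ with a generator of $K$.

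Concretely, I would fix a generator $\sigma$ of the cyclic group $K$ (Lemma~\ref{lemma2.4}(1)) and any $\tau\in\bar K$. Because $\bar K$ is the nontrivial coset of $K$ in $H$, the element $\sigma\tau$ again lies in $\bar K$, so by Lemma~\ref{lemma3.3} both $\tau$ and $\sigma\tau$ have order $2$; in particular $\tau^{-1}=\tau$. Lemma~\ref{lemma2.6} gives $\tau\sigma\tau^{-1}=\sigma^{n}$, hence $\tau\sigma\tau=\sigma^{n}$, and therefore
\[
e=(\sigma\tau)^{2}=\sigma(\tau\sigma\tau)=\sigma\cdot\sigma^{n}=\sigma^{\,n+1}.
\]
Since $\sigma$ has order $\frac{p-1}{2}$, this forces $\frac{p-1}{2}\mid n+1$, i.e. $n\equiv -1\pmod{\frac{p-1}{2}}$. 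As the remark following Lemma~\ref{lemma2.6} permits us to adjust $n$ modulo $\frac{p-1}{2}$, we may take $n=-1$, which is the assertion.

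I do not expect any real obstacle here: once Lemma~\ref{lemma3.3} is in hand the argument is a one-line computation. The only point requiring thought is the choice of test element. Applying the order-$2$ condition to $\tau$ alone merely reproduces $n^{2}\equiv 1$, which is already known from Lemma~\ref{lemma2.6}, whereas multiplying $\tau$ by a generator $\sigma$ couples $n$ to the order of $\sigma$ and pins it down to $-1$. Note that the hypothesis $p\equiv 1\pmod 4$ enters this corollary only through its use in Lemma~\ref{lemma3.3}.
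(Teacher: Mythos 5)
Your argument is correct and is essentially the paper's own proof: the paper likewise takes $\tau\in\bar K$ and $\sigma\in K$, uses Lemma \ref{lemma3.3} to get $(\tau\sigma)^2=e$ and $\tau=\tau^{-1}$, and concludes $\tau\sigma\tau^{-1}=\sigma^{-1}$, hence $n=-1$. Your variant with $\sigma\tau$ and a generator of $K$ is the same computation.
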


\begin{proof}
Take $\tau$ in $\bar{K}$, $\sigma$ in $K$. By Lemma \ref{lemma3.3}, $(\tau\sigma)(\tau\sigma)=e$ and $\tau=\tau^{-1}$. So $\tau\sigma\tau^{-1}=\sigma^{-1}$. Examining the proof of Lemma \ref{lemma2.6} we get the result.
\end{proof}

\begin{corollary}
\label{corollary3.5}
There is a $\lambda$ in $\bar{K}$ and a $c$ in $(\newz/p)^{*}$ such that:
\begin{enumerate}
\item[(1)] $\lambda(z)=z^{-1}$ for $z$ in $R$.
\item[(2)] $\lambda(z)=cz^{-1}$ for $z$ in $N$.
\end{enumerate}
\end{corollary}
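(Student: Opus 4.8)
The plan is to start from an arbitrary $\tau\in\bar{K}$ and correct it by a suitable element of $K$ so that the corrected map fixes the letter $1$. Recall from Corollary \ref{corollary3.4} that the exponent $n$ of Lemma \ref{lemma2.6} may be taken to be $-1$; since that exponent is governed by the conjugation automorphism of the abelian group $K$, which is the same for every element of the coset $\bar{K}$, the relation $\mu(az)=a^{-1}\mu(z)$ (for $a\in R$, $z\in(\newz/p)^{*}$) holds for \emph{every} $\mu\in\bar{K}$. This identity is the engine of the whole argument.

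First I would locate a $\lambda\in\bar{K}$ with $\lambda(1)=1$. Given any $\tau\in\bar{K}$, set $b=\tau(1)$. Because $p\equiv 1\pod{4}$, Lemma \ref{lemma2.5}(1) tells us $\tau$ stabilizes $R$, and since $1\in R$ we get $b\in R$. Hence $b^{-1}\in R$, so $\sigma:z\rightarrow b^{-1}z$ lies in $K$. Then $\lambda=\sigma\tau$ interchanges $0$ and $\infty$ (so $\lambda\in\bar{K}$) and satisfies $\lambda(1)=\sigma(b)=1$. This normalization is the only place any real choice is made, and it is the crux: everything else is forced.

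With such a $\lambda$ in hand, part (1) is immediate. Taking $z=1$ in the relation $\lambda(az)=a^{-1}\lambda(z)$ gives $\lambda(a)=a^{-1}\lambda(1)=a^{-1}$ for every $a\in R$, which is exactly (1). For part (2), I would exploit that $N$ is a single coset of $R$ in $(\newz/p)^{*}$. Fix any $w\in N$ and set $c=w\,\lambda(w)$. Any $z\in N$ can be written $z=aw$ with $a=z/w\in R$; then $\lambda(z)=\lambda(aw)=a^{-1}\lambda(w)=(w/z)\lambda(w)=cz^{-1}$. Equivalently, $z\,\lambda(z)$ is constant on $N$ by the same computation, so $c$ is well-defined and lies in $(\newz/p)^{*}$. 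This yields (2) and completes the proof.

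I do not anticipate a serious obstacle beyond the normalization step: the genuine content is that $\tau(1)$ is forced to be a square, which is precisely what $p\equiv 1\pod{4}$ buys us through Lemma \ref{lemma2.5}(1). Without that, we could not fix $1$ by an element of $K$, whose multipliers are only the squares in $R$.
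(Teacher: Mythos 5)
Your proof is correct and takes essentially the same route as the paper's: use Lemma \ref{lemma2.5}(1) to see that $\tau(1)\in R$, correct $\tau$ by the element $z\rightarrow\tau(1)^{-1}z$ of $K$ to get $\lambda\in\bar{K}$ with $\lambda(1)=1$, and then read off (1) and (2) from Corollary \ref{corollary3.4} and Lemma \ref{lemma2.6}. The paper's two-line proof leaves exactly these details implicit; you have simply written them out, including the correct observation that the exponent $n=-1$ works uniformly on the coset $\bar{K}$.
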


\begin{proof}
By Lemma \ref{lemma2.5} there is a $\bar{K}$ in $R$ with $\lambda(1)=1$. The result now follows from Corollary \ref{corollary3.4} and Lemma \ref{lemma2.6}
\end{proof}

Suppose we can show that the $c$ of Corollary \ref{corollary3.5} is 1. Then, composing $\lambda$ with the element  $z\rightarrow -z$ of $K$ we deduce that $z\rightarrow -\frac{1}{z}$ is in $G$. So to prove the classification theorem for $p\equiv 1\pod{4}$ it's enough to show that $c=1$.

\begin{proposition}
\label{prop3.6}
When $p\equiv 1\pod{4}$, $z\rightarrow -\frac{1}{z}$ is in $G$.
\end{proposition}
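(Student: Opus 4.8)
The plan is to show that the constant $c$ of Corollary~\ref{corollary3.5} equals $1$; as noted just before the statement, this is all that is needed. The strategy is to manufacture an explicit element of $G$ of order $3$ and then squeeze out the value of $c$ using the rigidity in Lemma~\ref{lemma2.4}(2).

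First I would pass from $\lambda$ to $\mu := (z\to -z)\circ\lambda$. Since $-1\in R$ (Lemma~\ref{lemma2.5}), the map $z\to -z$ lies in $K$, so $\mu\in\bar K\subseteq G$ is an involution (Lemma~\ref{lemma3.3}); and by Corollary~\ref{corollary3.5} we have $\mu(z)=-z^{-1}$ for $z\in R$ and $\mu(z)=-cz^{-1}$ for $z\in N$, while $\mu$ interchanges $0$ and $\infty$. Writing $t$ for the translation $z\to z+1$, a direct check gives $\mu t(\infty)=0$, $\mu t(0)=\mu(1)=-1$, and $\mu t(-1)=\mu(0)=\infty$, so $\{\infty,0,-1\}$ (three distinct letters, as $p>2$) is a size-$3$ orbit of $\mu t$. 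Hence $(\mu t)^{3}$ fixes each of $\infty,0,-1$; since no nonidentity element of $G$ fixes more than two letters (Lemma~\ref{lemma2.4}(2)) and $\mu t\ne e$, the element $\mu t$ has order exactly $3$.

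From $(\mu t)^{3}=e$ and $\mu^{2}=e$ I would extract the relation $\mu t\mu=t^{-1}\mu t^{-1}$ and evaluate both sides at a well-chosen letter. Let $x$ be the least positive nonsquare; then $x\in N$, while $1,\dots,x-1$ are all squares, so $x-1$ is a nonzero square, i.e.\ $x-1\in R$ (and automatically $x\ne c$, since $c\in R$ but $x\in N$). The right-hand side gives $t^{-1}\mu t^{-1}(x)=\mu(x-1)-1=-(x-1)^{-1}-1=\tfrac{-x}{x-1}$. The left-hand side gives $\mu t\mu(x)=\mu\!\left(\tfrac{x-c}{x}\right)$, which equals $-\tfrac{x}{x-c}$ or $-c\,\tfrac{x}{x-c}$ according as $\tfrac{x-c}{x}$ is or is not a square; equating with the right-hand side and clearing denominators forces $c=1$ in both cases. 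Once $c=1$ we have $\mu(z)=-z^{-1}=-\tfrac1z$ for every $z$, so $z\to-\tfrac1z$ is the element $\mu$ of $G$, which is the assertion.

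The step I expect to be the crux is the order-$3$ argument: it works only because the extra sign in $\mu$ (as opposed to $\lambda$ itself) closes the orbit $\infty\to0\to-1\to\infty$, and it is the interplay of this forced $3$-cycle with the ``at most two fixed letters'' lemma that pins the order at $3$ with no further input. Everything after that is a one-line substitution; the only mild point is the existence of a nonsquare $x$ with $x-1$ a nonzero square, which is handed to us for free by taking $x$ to be the least positive nonsquare.
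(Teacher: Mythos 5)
Your proof is correct and is essentially the paper's argument: your $\mu t$ is just the conjugate $t^{-1}\alpha t$ of the paper's order-$3$ element $\alpha(z)=1-\lambda(z)$, the order-$3$ claim is pinned down in the same way via Lemma~\ref{lemma2.4}(2), and evaluating the resulting relation at a letter of controlled quadratic character (you take the least positive nonsquare, so $x\in N$, $x-1\in R$; the paper takes $x\in R$ with $x-1\in N$) forces $c=1$ in exactly the same manner. No gaps.
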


\begin{proof}
Let $\alpha(z)=1-\lambda(z)$ with $\lambda$ as in Corollary \ref{corollary3.5}. Since $-1$ is in $R$, $\alpha$ is in $G$. Using the fact that $\lambda\circ\lambda =e$ we see that $\alpha^{-1}(z)=\lambda(1-z)$. Now $\alpha(0)=\infty$, $\alpha(\infty)=1$, $\alpha(1)=1-1=0$. So $\alpha\circ\alpha\circ\alpha$ fixes the letters 0, $\infty$ and 1. By Lemma \ref{lemma2.4} (2), $\alpha$ has order 3.

Since $p\equiv 1\pod{4}$, $p-1$ is in $R$. As not all of $1,2,\ldots ,p-2$ are in $R$ there is an $x$ in $R$ with $x-1$ in $N$. The paragraph above shows that $\alpha(\alpha(x))=\alpha^{-1}(x)=\lambda(1-x)$. We'll use this to show that $c=1$. Since $\lambda(x)=-\frac{1}{x}$, $\alpha(x)=\frac{x-1}{x}$ is in $N$. Consequently, $\lambda(\alpha(x))=\frac{cx}{x-1}$. Then $\alpha(\alpha(x))=\frac{x-1-cx}{x-1}$ while $\lambda(1-x)=-\frac{c}{x-1}$. So $x-1=cx-c$, and $c=1$.
\end{proof}

\section{$\bm{p\equiv 3\pod{4}$}. The main case}
\label{section4}

In this section $p\equiv 3\pod{4}$.

\begin{lemma}
\label{lemma4.1}
There is a unique $\lambda$ in $\bar{K}$ with $-\lambda(1)\lambda(-1)=1$. Furthermore $\lambda$ has order 2.
\end{lemma}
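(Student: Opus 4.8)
The plan is to study the single function $\Phi\colon\bar{K}\to(\newz/p)^{*}$ given by $\Phi(\tau)=\tau(1)\tau(-1)$ and to show it is a bijection onto $N$. Since $-1\in N$ by Lemma~\ref{lemma2.5}, this at once produces a unique $\lambda\in\bar{K}$ with $\Phi(\lambda)=-1$, which is precisely the condition $-\lambda(1)\lambda(-1)=1$. To analyze $\Phi$ I would fix one $\tau_{0}\in\bar{K}$ and write every element of $\bar{K}$ as $\sigma\circ\tau_{0}$ with $\sigma\colon z\to az$, $a\in R$ (Lemma~\ref{lemma2.4}(1)); a short computation then gives $\Phi(\sigma\circ\tau_{0})=a^{2}\Phi(\tau_{0})$.

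The crucial use of the hypothesis is that $p\equiv 3\pod{4}$ makes $\frac{p-1}{2}=|R|$ odd, so squaring is a bijection of the cyclic group $R$; hence $a\mapsto a^{2}$ runs through $R$ exactly once as $\sigma$ runs through $K$. Since $\tau_{0}(1)\in N$ and $\tau_{0}(-1)\in R$ (Lemma~\ref{lemma2.5}) force $\Phi(\tau_{0})\in N$, the values $a^{2}\Phi(\tau_{0})$ sweep out the coset $R\cdot\Phi(\tau_{0})=N$ exactly once. Thus $\Phi$ is injective with image $N$, and, the two sets having equal size $\frac{p-1}{2}$, it is a bijection onto $N$. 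This settles existence and uniqueness.

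For the order claim, write $t=\lambda(1)\in N$ and $s=\lambda(-1)\in R$, so the defining relation reads $s=-t^{-1}$. Using Lemma~\ref{lemma2.6} together with the Remark following it (so that $n$ may be taken odd), I would record $\lambda(z)=z^{n}t$ for $z\in R$ and, writing $z=(-z)\cdot(-1)$ with $-z\in R$, $\lambda(z)=(-z)^{n}s$ for $z\in N$. Since $\lambda\in\bar{K}$, the element $\lambda^{2}$ lies in $K$ and so is a map $z\to bz$ with $b\in R$. Evaluating $b=\lambda^{2}(1)=\lambda(t)$ yields $b=t^{n-1}$, while evaluating $\lambda^{2}(-1)=\lambda(s)=-t^{-(n-1)}$ and comparing with $\lambda^{2}(-1)=-b$ yields $b=t^{-(n-1)}=b^{-1}$. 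Hence $b^{2}=1$, so $b=\pm1$; but $b\in R$ whereas $-1\in N$, so $b=1$ and $\lambda^{2}=e$. As $\lambda\ne e$, $\lambda$ has order $2$.

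The main obstacle is the counting step: recognizing $\tau(1)\tau(-1)$ as the right invariant of $\tau\in\bar{K}$, and seeing that $p\equiv 3\pod{4}$ is exactly what makes squaring a bijection of $R$, without which $\Phi$ would fail to be onto and uniqueness could break down. By comparison the order-$2$ verification is a short computation, whose one delicate point is eliminating the alternative $b=-1$; that elimination again relies on $-1\notin R$.
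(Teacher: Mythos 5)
Your proposal is correct. For existence and uniqueness you take essentially the same route as the paper: the paper fixes $\tau\in\bar{K}$, notes $u=-\tau(1)\tau(-1)\in R$, and observes that replacing $\tau$ by $z\to v\tau(z)$ multiplies $u$ by $v^{2}$, so that the unique square root of $u^{-1}$ in $R$ (squaring being a bijection of $R$ because $|R|=\frac{p-1}{2}$ is odd) produces the unique $\lambda$; your $\Phi(\tau)=\tau(1)\tau(-1)$ is the same invariant and the same parity argument, merely packaged as a bijection $\bar{K}\to N$. Where you genuinely diverge is the order-$2$ claim. The paper avoids computing $\lambda^{2}$: it picks $a,b\in R$ with $\lambda(a)=-1$ and $\lambda(-b)=1$, deduces $(ab)^{n}=1$ and hence $ab=1$ (since $n$ is prime to $\frac{p-1}{2}$), checks that $\lambda^{-1}$ satisfies the same normalization $-\lambda^{-1}(-1)\lambda^{-1}(1)=1$, and concludes $\lambda=\lambda^{-1}$ from the uniqueness just proved. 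You instead write out $\lambda$ explicitly on $R$ and on $N$ via Lemma~\ref{lemma2.6} and the oddness of $n$, compute the scalar $b$ with $\lambda^{2}(z)=bz$ in two ways, and eliminate $b=-1$ because $-1\notin R$. Both arguments are sound; the paper's is slicker in that it recycles the uniqueness statement and never needs the explicit formulas, while yours is more mechanical but has the side benefit of already producing the formulas $\lambda(z)=cz^{n}$ for $z\in R$ and $\lambda(z)=c^{-1}z^{n}$ for $z\in N$ (with $c=\lambda(1)$), which the paper only establishes afterwards in Corollary~\ref{corollary4.2}.
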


\begin{proof}
Fix $\tau$ in $\bar{K}$. By Lemma \ref{lemma2.5}, $-1$ and $\tau(1)$ are in $N$ while $\tau(-1)$ is in $R$. So $u=-\tau(1)\tau(-1)$ is in $R$. Replacing $\tau$ by $z\rightarrow v\tau(z)$ with $v$ in $R$ multiplies $-\tau(1)\tau(-1)$ by $v^{2}$. Since there is a unique $v$ in $R$ with $v^{2}=u^{-1}$ we get the existence and uniqueness of $\lambda$. By Lemma \ref{lemma2.5} there are $a$ and $b$ in $R$ with $\lambda(a)=-1$, $\lambda(-b)=1$. Taking $n$ as in Lemma \ref{lemma2.6} we find that $a^{n}\lambda(1)=-1$, $b^{n}\lambda(-1)=1$. Multiplying we see that $(ab)^{n}=1$, so $ab=1$. Now $-\lambda^{-1}(-1)\lambda^{-1}(1)=(-a)(-b)=1$, and the uniqueness of $\lambda$ tells us that $\lambda=\lambda^{-1}$.
\end{proof}

\begin{corollary}
\label{corollary4.2}
Choose $n$ odd as in Lemma \ref{lemma2.6}. Then there is a $\lambda$ of order 2 in $\bar{K}$ and a $c$ in $N$ with

\begin{enumerate}
\item[(1)] $\lambda(z)=cz^{n}\qquad z\mbox{ in }R$
\item[(2)] $\lambda(z)=c^{-1}z^{n}\qquad z\mbox{ in }N$
\item[(3)] $c^{n}=c$
\end{enumerate}
\end{corollary}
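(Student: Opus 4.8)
The plan is to start from the distinguished involution $\lambda$ produced by Lemma \ref{lemma4.1}, which has order $2$ and is normalized by the relation $-\lambda(1)\lambda(-1)=1$, and to fix $n$ odd as permitted by the Remark following Lemma \ref{lemma2.6}. The guiding idea is that everything about $\lambda$ is controlled by the two values $\lambda(1)$ and $\lambda(-1)$: once these are known, Lemma \ref{lemma2.6} propagates them across the two cosets $R$ and $N$ of $(\newz/p)^{*}$, since $R$ and $-R=N$ exhaust $(\newz/p)^{*}$ when $p\equiv 3\pod{4}$.

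First I would set $c=\lambda(1)$. Because $1\in R$ and, by Lemma \ref{lemma2.5}, $\lambda$ interchanges $R$ and $N$ when $p\equiv 3\pod{4}$, we get $c\in N$, as the statement requires. Part (1) is then immediate: for $a\in R$, Lemma \ref{lemma2.6} (with $z=1$) gives $\lambda(a)=\lambda(a\cdot 1)=a^{n}\lambda(1)=ca^{n}$.

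Next I would pin down $\lambda(-1)$, which is the one genuinely load-bearing computation. Since $-1\in N$, the normalization $-\lambda(1)\lambda(-1)=1$ of Lemma \ref{lemma4.1} rearranges to $\lambda(-1)=-c^{-1}$. For part (2), given $z\in N$ I would write $z=-a$ with $a=-z\in R$ (a product of two non-squares is a square); Lemma \ref{lemma2.6} then yields $\lambda(z)=\lambda(a\cdot(-1))=a^{n}\lambda(-1)=-a^{n}c^{-1}$, and since $n$ is odd $a^{n}=(-z)^{n}=-z^{n}$, so $\lambda(z)=c^{-1}z^{n}$.

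Finally, part (3) falls out by exploiting that $\lambda$ has order $2$ at the single point $-1$. Here $\lambda(-1)=-c^{-1}$ lies in $R$, being a product of the two non-squares $-1$ and $c^{-1}$, so the formula of (1) applies and gives $\lambda(-c^{-1})=c(-c^{-1})^{n}=-c^{1-n}$, again using that $n$ is odd. But $\lambda$ is an involution, so $\lambda(\lambda(-1))=-1$, forcing $-c^{1-n}=-1$ and hence $c^{n}=c$. Apart from this last identity, the steps are routine once the bookkeeping of squares versus non-squares and the parity of $n$ is in place; the main obstacle is recognizing that the identification $\lambda(-1)=-c^{-1}$ — exactly what the normalization in Lemma \ref{lemma4.1} was designed to supply — is what drives both (2) and (3).
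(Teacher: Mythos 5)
Your proposal is correct and follows essentially the same route as the paper: take the normalized involution $\lambda$ from Lemma \ref{lemma4.1}, set $c=\lambda(1)\in N$, use $\lambda(-1)=-c^{-1}$ together with Lemma \ref{lemma2.6} to get (1) and (2), and then apply the order-2 property at a single point to force $c^{n}=c$. The only (cosmetic) difference is that for (3) you evaluate $\lambda\circ\lambda$ at $-1$ via formula (1), while the paper evaluates it at $1$ (i.e., computes $\lambda(c)$ via formula (2)); the two computations are interchangeable.
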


\begin{proof}
Take $\lambda$ as in Lemma \ref{lemma4.1} and set $c=\lambda(1)$. By Lemma \ref{lemma2.5}, $c$ is in $N$. Since $\lambda(1)=c$, $\lambda(-1)=-\frac{1}{c}=\frac{1}{c}\cdot(-1)^{n}$. Lemma \ref{lemma2.6} then gives (1) and (2). Since $c$ is in $N$, $\lambda(c)=c^{-1}\cdot c^{n}$. But as $\lambda$ has order 2, $\lambda(c)=1$.
\end{proof}

\begin{lemma}
\label{lemma4.3}
Let $\alpha(z)=1-c^{-1}\lambda(z)$ with $c$ and $\lambda$ as above. Then $\alpha$ is an element of $G$ of order 3 and $\alpha^{-1}(z)=\lambda(c(1-z))$.
\end{lemma}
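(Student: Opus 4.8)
The plan is to realize $\alpha$ as a composition of two maps already known to lie in $G$, then read off its action on three convenient letters to pin down its order, and finally invert the composition to identify $\alpha^{-1}$.

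First I would write $\alpha = \beta\circ\lambda$, where $\beta(z)=1-c^{-1}z$ is the affine map $z\rightarrow -c^{-1}z+1$. The map $\lambda$ lies in $\bar{K}\subseteq G$ by Corollary~\ref{corollary4.2}, so the only thing to verify is that $\beta\in G$. Now $G_{\infty}$ contains the translations and, by Lemma~\ref{lemma2.4}(1), the maps $z\rightarrow az$ with $a\in R$; hence $G_{\infty}$ contains every affine map $z\rightarrow az+b$ whose leading coefficient $a$ is a square. The leading coefficient of $\beta$ is $-c^{-1}$, and this is exactly where the hypothesis $p\equiv 3\pod{4}$ does the real work: by Lemma~\ref{lemma2.5} we have $-1\in N$, while $c\in N$ forces $c^{-1}\in N$, so $-c^{-1}$ is a product of two non-squares and therefore lies in $R$. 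Thus $\beta\in G_{\infty}$ and $\alpha=\beta\circ\lambda\in G$. I expect this membership check — spotting that the twisting factor $c^{-1}$ and the sign $-1$ conspire to produce a square — to be the only genuinely delicate point; everything after it is bookkeeping.

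Next I would determine the order of $\alpha$ exactly as in the proof of Proposition~\ref{prop3.6}. Using $\lambda(0)=\infty$ and $\lambda(\infty)=0$ (since $\lambda\in\bar{K}$), together with $\lambda(1)=c$ (the definition of $c$ in Corollary~\ref{corollary4.2}), a direct substitution into $\alpha(z)=1-c^{-1}\lambda(z)$ gives $\alpha(0)=\infty$, $\alpha(\infty)=1$, and $\alpha(1)=0$. Hence $\alpha$ permutes the three letters $0,\infty,1$ cyclically, so $\alpha\circ\alpha\circ\alpha$ fixes all three of them. By Lemma~\ref{lemma2.4}(2) this forces $\alpha^{3}=e$, and since $\alpha(0)=\infty\ne 0$ we have $\alpha\ne e$; therefore $\alpha$ has order $3$.

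Finally, to identify $\alpha^{-1}$ I would invert the composition. Since $\lambda$ has order $2$, $\lambda^{-1}=\lambda$, and solving $w=1-c^{-1}z$ gives $\beta^{-1}(w)=c(1-w)$. Thus $\alpha^{-1}=\lambda^{-1}\circ\beta^{-1}=\lambda\circ\beta^{-1}$, that is, $\alpha^{-1}(z)=\lambda(c(1-z))$, as claimed. (Equivalently, one checks directly that $\alpha(\lambda(c(1-z)))=1-c^{-1}\lambda(\lambda(c(1-z)))=1-c^{-1}\cdot c(1-z)=z$, using $\lambda\circ\lambda=e$.)
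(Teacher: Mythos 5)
Your proof is correct and follows essentially the same route as the paper's: membership in $G$ via $-c^{-1}\in R$, the cycle $0\rightarrow\infty\rightarrow 1\rightarrow 0$ combined with Lemma~\ref{lemma2.4}(2) to get order $3$, and a direct verification (using $\lambda\circ\lambda=e$) that $z\rightarrow\lambda(c(1-z))$ inverts $\alpha$. Your explicit factorization $\alpha=\beta\circ\lambda$ merely spells out what the paper compresses into one sentence.
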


\begin{proof}
Since $c$ is in $N$, $-c^{-1}$ is in $R$, and $\alpha$ is in $G$. Also $\alpha(0)=\infty$, $\alpha(\infty)=1$ and $\alpha(1)=1-c^{-1}\cdot c=0$. So $\alpha\circ\alpha\circ\alpha$ fixes the letters 0, $\infty$ and 1; by Lemma \ref{lemma2.4} (2), $\alpha$ has order 3. Finally if $\mu$ is the map $z\rightarrow \lambda(c(1-z))$, then $\mu(\alpha(z))=\lambda(\lambda(z))=z$, and so $\alpha^{-1}=\mu$.
\end{proof}

The proof of the classification theorem for $p\equiv 3\pod{4}$ now divides into 2 subcases. In this section we treat the ``main case'' where the $c$ of Corollary \ref{corollary4.2} is $-1$, showing that $n\equiv -1\pod{p-1}$ so that $\lambda(z)=-\frac{1}{z}$ for all $z$. The ``special case'', $c\ne -1$, which leads to conclusion (b) of the classification theorem will be handled in the next section --- it's a bit more technical.

\begin{lemma}
\label{lemma4.4}
In the main case the only solutions of $x^{n}=x$ in $(\newz/p)^{*}$ are $1$ and $-1$.
\end{lemma}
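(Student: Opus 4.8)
The plan is to extract a single numerical equation from the order-$3$ relation satisfied by $\alpha$, and then count its solutions. First I would specialize the data of Corollary \ref{corollary4.2} and Lemma \ref{lemma4.3} to the main case $c=-1$. There the two formulas for $\lambda$ collapse into one, $\lambda(z)=-z^{n}$ on all of $(\newz/p)^{*}$, so that $\alpha(z)=1-z^{n}$ and, by Lemma \ref{lemma4.3}, $\alpha^{-1}(z)=\lambda(-(1-z))=\lambda(z-1)=-(z-1)^{n}$ whenever $z\ne 1$. I would also record two facts for later use: since $n$ is odd, $(-1)^{n}=-1$, so $x=-1$ \emph{is} a solution of $x^{n}=x$; and by the Remark following Lemma \ref{lemma2.6}, $n$ is odd and prime to $\frac{p-1}{2}$, hence prime to $p-1$, so that $w\mapsto w^{n}$ is a bijection of $(\newz/p)^{*}$.

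The key step is to feed a hypothetical solution into $\alpha$. Suppose $x\ne 1$ satisfies $x^{n}=x$. Then $\alpha(x)=1-x^{n}=1-x$, and applying $\alpha$ a second time gives $\alpha^{2}(x)=1-(1-x)^{n}$; this is legitimate because $x\ne 1$ keeps $1-x$ in $(\newz/p)^{*}$. On the other hand $\alpha^{2}=\alpha^{-1}$ since $\alpha$ has order $3$, so $\alpha^{2}(x)=-(x-1)^{n}=(1-x)^{n}$, the last equality using that $n$ is odd. Equating the two expressions yields $1-(1-x)^{n}=(1-x)^{n}$, that is, $(1-x)^{n}=2^{-1}$. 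Note that $x^{n}=x$ was used only at the \emph{first} application of $\alpha$, which is exactly what lets the iterate be computed cleanly.

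Finally I would count. The composite $x\mapsto(1-x)^{n}$ is injective on $(\newz/p)^{*}\setminus\{1\}$, since the affine map $x\mapsto 1-x$ is injective and $w\mapsto w^{n}$ is a bijection by the coprimality noted above. Hence at most one $x\ne 1$ can satisfy $(1-x)^{n}=2^{-1}$, and therefore at most one $x\ne 1$ can satisfy $x^{n}=x$. Since $-1$ is such a solution, it must be the only one, which is the assertion of the lemma. (As a bonus this silently rules out $n\equiv 1$, which would otherwise make every element a solution.)

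The main obstacle I anticipate is conceptual rather than computational: recognizing that the relation $\alpha^{2}=\alpha^{-1}$, vacuous at a generic point, becomes a genuine constraint precisely when evaluated at a fixed-power solution, and that the coprimality of $n$ to $p-1$ converts the resulting equation $(1-x)^{n}=2^{-1}$ into one with a unique root. I would also take care with the domain conditions—$x\ne 1$ so that $1-x\ne 0$, and the precise ranges of validity of the $\alpha$- and $\alpha^{-1}$-formulas—so that no intermediate step silently passes through $0$ or $\infty$.
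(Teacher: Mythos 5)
Your proposal is correct and follows essentially the same route as the paper: specialize to $\lambda(z)=-z^{n}$, evaluate $\alpha\circ\alpha=\alpha^{-1}$ at a hypothetical solution $x\ne 1$ of $x^{n}=x$ to get $(1-x)^{n}=2^{-1}$, and conclude there is at most one such $x$, which must be $-1$. The only (immaterial) difference is the last step: the paper raises $(1-x)^{n}=2^{-1}$ to the $n$th power and uses $n^{2}\equiv 1\pod{p-1}$ to pin down $x=1-2^{-n}$ explicitly, whereas you invoke injectivity of $w\mapsto w^{n}$ from $\gcd(n,p-1)=1$.
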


\begin{proof}
Since $c=-1$, $c^{-1}=-1$, and $\lambda(x)=-x^{n}$ for all $x$ in $(\newz/p)^{*}$. Thus $\alpha(x)=1-x^{n}$. Suppose now that $x\ne 1$ is in $(\newz/p)^{*}$ with $x^{n}=x$. Then $\alpha(x)=1-x$ and so $\alpha(\alpha(x))=1-(1-x)^{n}$. By Lemma \ref{lemma4.3}, $\alpha^{-1}(x)=\lambda(x-1)=-(x-1)^{n}=(1-x)^{n}$.  Since $\alpha(\alpha(x))=\alpha^{-1}(x)$, $(1-x)^{n}=\frac{1}{2}$. Raising to the $n$th power we find that $1-x=2^{-n}$. So $1$ and $1-2^{-n}$ are the only possible solutions of $x^{n}=x$ in $(\newz/p)^{*}$. Since $1$ and $-1$ are solutions we're done.
\end{proof}

\begin{proposition}
\label{prop4.5}
Suppose $p\equiv 3\pod{4}$. In the main case, $\lambda$ is the map $z\rightarrow -\frac{1}{z}$, and so $z\rightarrow -\frac{1}{z}$ is in $G$.
\end{proposition}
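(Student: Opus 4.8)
The plan is to prove the single congruence $n\equiv -1\pod{p-1}$ for the integer $n$ of Lemma \ref{lemma2.6}, and to check that this is all that is needed. Indeed, in the main case $c=-1$, so Corollary \ref{corollary4.2} gives $\lambda(z)=-z^{n}$ for every $z$ in $(\newz/p)^{*}$. Since $(\newz/p)^{*}$ is cyclic of order $p-1$, the condition $n\equiv -1\pod{p-1}$ is exactly $z^{n}=z^{-1}$ for all such $z$, which turns this into $\lambda(z)=-z^{-1}$ on $(\newz/p)^{*}$. As $\lambda\in\bar K$ it also interchanges $0$ and $\infty$, precisely as $z\rightarrow -\frac{1}{z}$ does; so $\lambda$ agrees with $z\rightarrow -\frac{1}{z}$ everywhere, and this map lies in $G$. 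Thus everything reduces to showing $p-1\mid n+1$.

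To prove that I would combine two divisibility facts. First, Lemma \ref{lemma4.4} says the only solutions in $(\newz/p)^{*}$ of $x^{n}=x$, i.e.\ of $x^{n-1}=1$, are $1$ and $-1$; since the number of solutions of $x^{n-1}=1$ in a cyclic group of order $p-1$ is $\gcd(n-1,p-1)$, this says exactly that $\gcd(n-1,p-1)=2$. Second, Lemma \ref{lemma2.6} gives $\tfrac{p-1}{2}\mid n^{2}-1=(n-1)(n+1)$. Because $p\equiv 3\pod 4$, the integer $\tfrac{p-1}{2}$ is odd; hence $\gcd(n-1,\tfrac{p-1}{2})$, which divides both $n-1$ and $p-1$ and so divides $\gcd(n-1,p-1)=2$ while also being a divisor of the odd number $\tfrac{p-1}{2}$, must equal $1$. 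With $n-1$ and $\tfrac{p-1}{2}$ coprime, the divisibility $\tfrac{p-1}{2}\mid(n-1)(n+1)$ upgrades to $\tfrac{p-1}{2}\mid n+1$.

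Finally I would restore the missing factor of $2$. The chosen $n$ is odd (Remark following Lemma \ref{lemma2.6}), so $n+1$ is even, whereas $\tfrac{p-1}{2}$ is odd; writing $n+1=\tfrac{p-1}{2}\,k$ then forces $k$ to be even, and therefore $p-1\mid n+1$, as required. The only genuinely delicate point is this parity bookkeeping, together with the companion extraction of $\gcd(n-1,\tfrac{p-1}{2})=1$ from $\gcd(n-1,p-1)=2$: both steps lean crucially on $\tfrac{p-1}{2}$ being odd, i.e.\ on the hypothesis $p\equiv 3\pod 4$, and on $n$ having been normalized to be odd. Once these coprimality and parity facts are in place, $p-1\mid n+1$, and hence the Proposition, follow with no further computation.
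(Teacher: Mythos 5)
Your proposal is correct and follows essentially the same route as the paper: extract from Lemma \ref{lemma4.4} that $n-1$ is coprime to $\frac{p-1}{2}$, use Lemma \ref{lemma2.6} to get $\frac{p-1}{2}\mid n+1$, and then use the oddness of $n$ and of $\frac{p-1}{2}$ to upgrade this to $p-1\mid n+1$. The only cosmetic difference is that the paper obtains the coprimality directly by noting that $1$ is the only solution of $x^{n}=x$ in the cyclic group $R$ of order $\frac{p-1}{2}$, whereas you compute $\gcd(n-1,p-1)=2$ in the full group and then divide out the factor of $2$.
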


\begin{proof}
By Lemma \ref{lemma4.4} the only solution of $x^{n}=x$ in the cyclic group $R$ of order $\frac{p-1}{2}$ is $1$. So $n-1$ is prime to $\frac{p-1}{2}$. Now $\frac{p-1}{2}$ divides $(n+1)(n-1)$ by Lemma \ref{lemma2.6}. So it divides $n+1$, and as $n$ is odd, $n\equiv -1\pod{p-1}$. Then for $z$ in $(\newz/p)^{*}$, $\lambda(z)=-z^{n}=-\frac{1}{z}$. Furthermore $\lambda(0)=\infty$, $\lambda(\infty)=0$.
\end{proof}

\section{$\bm{p\equiv 3\pod{4}$}. The special case}
\label{section5}

We continue with the notation of Section \ref{section4} but now assume $c\ne -1$

\begin{lemma}
\label{lemma5.1}
Let $x$ be a power of $-c$, and suppose that $1-x$ is in $N$. Then:
\begin{enumerate}
\item[(a)] $\alpha(\alpha(x))=1-c^{-2}(1-x)^{n}$
\item[(b)] $\alpha(\alpha(x^{-1}))=1+x^{-1}(1-x)^{n}$
\item[(c)] $\alpha^{-1}(x)=c^{2}(1-x)^{n}$
\item[(d)] $\alpha^{-1}(x^{-1})=-x^{-1}(1-x)^{n}$
\end{enumerate}
\end{lemma}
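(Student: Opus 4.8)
The plan is to compute each of the four expressions directly from the explicit formulas established in the previous lemmas, treating the key cases separately according to whether the relevant argument lands in $R$ or $N$. The crucial inputs are the piecewise description of $\lambda$ from Corollary \ref{corollary4.2}, namely $\lambda(z)=cz^{n}$ for $z\in R$ and $\lambda(z)=c^{-1}z^{n}$ for $z\in N$, together with $\alpha(z)=1-c^{-1}\lambda(z)$ and $\alpha^{-1}(z)=\lambda(c(1-z))$ from Lemma \ref{lemma4.3}. The hypothesis that $x$ is a power of $-c$ is what lets me track whether $x$ and $x^{-1}$ lie in $R$ or $N$: since $c\in N$ and $-1\in N$ (as $p\equiv 3\pod 4$), we have $-c\in R$, so every power of $-c$ lies in $R$; hence both $x$ and $x^{-1}$ are squares, while the hypothesis forces $1-x\in N$.

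First I would establish (c) and (d), since they are the shortest and feed directly into the doubly-iterated formulas. For (c), apply $\alpha^{-1}(z)=\lambda(c(1-z))$ at $z=x$: I need $\lambda$ evaluated at $c(1-x)$. Since $c\in N$ and $1-x\in N$, their product $c(1-x)$ lies in $R$, so $\lambda(c(1-x))=c\cdot(c(1-x))^{n}=c^{1+n}(1-x)^{n}$; using $c^{n}=c$ from Corollary \ref{corollary4.2}(3) this collapses to $c^{2}(1-x)^{n}$, giving (c). For (d), apply the same formula at $z=x^{-1}$, so I evaluate $\lambda$ at $c(1-x^{-1})=c\cdot x^{-1}(x-1)$; here $x^{-1}\in R$ and $x-1=-(1-x)\in R$ (since $-1\in N$ and $1-x\in N$), so the argument $c x^{-1}(x-1)$ lies in $N$, and thus $\lambda$ uses the $c^{-1}$-branch, yielding $c^{-1}\bigl(cx^{-1}(x-1)\bigr)^{n}=c^{n-1}x^{-n}(x-1)^{n}$; simplifying with $c^{n}=c$ and $x^{n}=x^{-n\cdot(-1)}$... more carefully, $x^{-n}$ together with $(x-1)^n=(-(1-x))^n=-(1-x)^n$ (as $n$ is odd) should produce $-x^{-1}(1-x)^{n}$, which is (d). The computation of $x^{-n}$ reducing to $x^{-1}$ will follow because Lemma \ref{lemma4.4} does not apply here, so I must instead use the multiplicativity relation $\lambda(az)=a^{n}\lambda(z)$ from Lemma \ref{lemma2.6} directly rather than assuming $x^n=x$.

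Next I would handle (a) and (b), each of which requires computing $\alpha$ once and then $\alpha$ again, carefully re-checking at the intermediate step whether the new argument is a square or non-square. For (a), first compute $\alpha(x)=1-c^{-1}\lambda(x)=1-c^{-1}\cdot cx^{n}=1-x^{n}$ (using $x\in R$); then determine the class of $\alpha(x)$ and apply $\alpha$ again, which after substituting $\lambda$ on the appropriate branch and using $c^{n}=c$ should yield $1-c^{-2}(1-x)^{n}$. For (b) the analogous two-step evaluation starting from $x^{-1}\in R$ gives the stated $1+x^{-1}(1-x)^{n}$.

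The main obstacle I expect is the bookkeeping of residue classes at the intermediate arguments $\alpha(x)$ and $\alpha(x^{-1})$: to apply the correct branch of $\lambda$ in the second application, I must verify whether $1-x^{n}$ (and its counterpart) is a square or a non-square, and this is not immediate from the hypotheses, since it concerns a different element than $1-x$. The cleanest route is probably to avoid splitting $\lambda$ at the second step altogether by instead expressing $\alpha\circ\alpha$ via the order-3 relation $\alpha\circ\alpha=\alpha^{-1}$, so that (a) and (b) reduce to the already-computed (c) and (d) applied at $x$ and $x^{-1}$ respectively — then the two pairs of identities are simply two presentations of the same quantity, and consistency (rather than a fresh class analysis) is all that needs checking. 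This reconciliation of the $\alpha^{-1}$ formula with the direct two-step computation of $\alpha\circ\alpha$ is where the real content lies.
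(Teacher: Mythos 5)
There is a genuine gap, and it sits exactly where you flagged your own uncertainty. The single fact that makes every part of this lemma go through is that $x^{n}=x$ for any power $x$ of $-c$: since $n$ is odd and $c^{n}=c$ (Corollary \ref{corollary4.2}(3)), one has $(-c)^{n}=-c$, hence $\bigl((-c)^{k}\bigr)^{n}=(-c)^{k}$. You never derive this, and your proposed substitute --- invoking the multiplicativity $\lambda(az)=a^{n}\lambda(z)$ from Lemma \ref{lemma2.6} instead of ``assuming $x^{n}=x$'' --- does not help: writing $\lambda\bigl(cx^{-1}(x-1)\bigr)=x^{-n}\lambda(c(x-1))$ still leaves you with the factor $x^{-n}$, and the stated formula (d) has $x^{-1}$, so the identity $x^{n}=x$ is genuinely required, not avoidable. (Lemma \ref{lemma4.4} is indeed irrelevant here; the correct source is Corollary \ref{corollary4.2}(3) together with the hypothesis that $x$ is a power of $-c$.) Once $x^{n}=x$ is in hand, the obstacle you worry about for (a) and (b) evaporates: $\alpha(x)=1-c^{-1}\cdot cx^{n}=1-x$, which is in $N$ \emph{by hypothesis}, and $\alpha(x^{-1})=1-x^{-1}=\frac{1-x}{-x}$ is a quotient of two elements of $N$ and hence in $R$; the residue-class bookkeeping at the intermediate step is therefore completely determined, and the direct two-step computation (the paper's route) finishes (a) and (b) in one line each.

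Your proposed ``cleanest route'' for (a) and (b) --- defining $\alpha\circ\alpha$ via the order-$3$ relation $\alpha\circ\alpha=\alpha^{-1}$ and reducing to (c) and (d) --- must be rejected, not merely tightened. That argument would prove $\alpha(\alpha(x))=c^{2}(1-x)^{n}$, which is the formula in (c), \emph{not} the formula $1-c^{-2}(1-x)^{n}$ asserted in (a); the two expressions are not identically equal, and their equality is precisely the nontrivial constraint that Lemma \ref{lemma5.2} extracts to conclude $c^{2}+c^{-2}+2x^{-1}=0$. Treating their agreement as a ``consistency check'' assumes the very relation the next lemma is designed to derive. The two sides of each pair must be computed independently --- (a) and (b) by iterating $\alpha$ directly, (c) and (d) from the closed form $\alpha^{-1}(z)=\lambda(c(1-z))$ --- which is exactly what the paper does. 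Your computations of (c), and of (d) modulo the missing $x^{n}=x$, are otherwise correct and match the paper's.
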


\begin{proof}
Since $c^{n}=c$ and $n$ is odd, $x^{n}=x$.  Since $c$ is in $N$, $x$ is in $R$. Thus $\alpha(x)=1-c^{-1}(cx^{n})=1-x$, and similarly $\alpha(x^{-1})=1-x^{-1}=\frac{1-x}{-x}$, which is in $R$.

Now $\alpha(\alpha(x))=\alpha(1-x)=1-c^{-1}c^{-1}(1-x)^{n}$ giving (a). And $\alpha(\alpha(x^{-1}))=\alpha\left(\frac{1-x}{-x}\right)=1-\left(\frac{1-x}{-x}\right)^{n}=1+x^{-1}(1-x)^{n}$ giving (b). Furthermore $\alpha^{-1}(x)=\lambda(c(1-x))$. Since $c(1-x)$ is in $R$, this is $c\cdot c(1-x)^{n}$. Finally $\alpha^{-1}(x^{-1})=\lambda\left(\frac{c(1-x)}{-x}\right)=c^{-1}\left(\frac{c}{-x}\right)\cdot(1-x)^{n}=-x^{-1}(1-x)^{n}$.
\end{proof}

\begin{lemma}
\label{lemma5.2}
In the situation of Lemma \ref{lemma5.1}, $c^{2}+c^{-2}+2x^{-1}=0$.
\end{lemma}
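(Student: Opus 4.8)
The plan is to exploit the single structural fact that has not yet been used in Lemma \ref{lemma5.1}, namely that $\alpha$ has order $3$ (Lemma \ref{lemma4.3}). Order $3$ means $\alpha\circ\alpha=\alpha^{-1}$, so for any letter $w$ we must have $\alpha(\alpha(w))=\alpha^{-1}(w)$. The four formulas (a)--(d) of Lemma \ref{lemma5.1} were deliberately computed so that (a) and (c) are the two sides of this identity applied to $w=x$, while (b) and (d) are its two sides applied to $w=x^{-1}$. So the whole lemma should fall out by simply setting (a) equal to (c) and (b) equal to (d), then eliminating the common factor.

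Concretely, first I would equate the right-hand sides of (a) and (c). This gives $1-c^{-2}(1-x)^{n}=c^{2}(1-x)^{n}$, which rearranges to $(c^{2}+c^{-2})(1-x)^{n}=1$. Next I would equate the right-hand sides of (b) and (d), obtaining $1+x^{-1}(1-x)^{n}=-x^{-1}(1-x)^{n}$, which collapses to $-2x^{-1}(1-x)^{n}=1$. These are two expressions for the same quantity: both equal $1$, hence $(c^{2}+c^{-2})(1-x)^{n}=-2x^{-1}(1-x)^{n}$.

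Finally I would cancel $(1-x)^{n}$. This is legitimate because $1-x$ lies in $N$ by the hypothesis of Lemma \ref{lemma5.1}, so $1-x$ is a nonzero element of $(\newz/p)^{*}$ and $(1-x)^{n}\ne 0$. Cancelling yields $c^{2}+c^{-2}=-2x^{-1}$, which is the claimed relation $c^{2}+c^{-2}+2x^{-1}=0$. There is essentially no hard step here: once one recognizes that $\alpha^{2}=\alpha^{-1}$ turns (a)=(c) and (b)=(d) into two equations in the single unknown $(1-x)^{n}$, the result is forced. The only point requiring a word of care is the cancellation at the end, which is why I would explicitly record that $1-x\in N$ guarantees $(1-x)^{n}$ is invertible.
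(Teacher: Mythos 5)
Your proof is correct and matches the paper's argument: both derive $(c^{2}+c^{-2})(1-x)^{n}=1$ and $2x^{-1}(1-x)^{n}=-1$ from $\alpha\circ\alpha=\alpha^{-1}$ applied at $x$ and $x^{-1}$, and then use $(1-x)^{n}\ne 0$ to conclude (the paper adds the two identities rather than equating them, a purely cosmetic difference).
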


\begin{proof}
$\alpha(\alpha(x))=\alpha^{-1}(x)$ by Lemma \ref{lemma4.3}. (a) and (c) above tell us that $(c^{2}+c^{-2})(1-x)^{n}=1$. Similarly, (b) and (d) tell us that $2x^{-1}(1-x)^{n}=-1$. Adding these identities and noting that $(1-x)^{n}\ne 0$ we get the result.
\end{proof}

\begin{lemma}
\label{lemma5.3}
$c^{3}=-1$, and either $c^{4}+3=0$ or $3c^{4}+1=0$.
\end{lemma}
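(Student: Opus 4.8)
The plan is to prove the whole lemma by controlling the cyclic subgroup $\langle -c\rangle$ of $(\newz/p)^{*}$. First I would record that $-c$ lies in $R$: since $p\equiv 3\pod 4$ we have $-1\in N$ by Lemma \ref{lemma2.5}, and $c\in N$ by Corollary \ref{corollary4.2}, so $-c=(-1)c$ is a product of two non-squares, hence a square. Let $m$ be the order of $-c$. Then $m$ divides $|R|=\frac{p-1}{2}$, which is odd, so $m$ is odd; and $m\ne 1$ because $c\ne -1$ in the special case. Thus $m\ge 3$, and the heart of the argument will be to force $m=3$, i.e.\ $(-c)^{3}=1$, i.e.\ $c^{3}=-1$.

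The key observation is a parity symmetry on the powers of $-c$ together with the uniqueness buried in Lemma \ref{lemma5.2}. For any $t\in R$ with $t\ne 1$ one checks the identity $1-t^{-1}=(-t^{-1})(1-t)$; here $t^{-1}\in R$ while $-t^{-1}\in N$ (as $-1\in N$), so multiplication by $-t^{-1}$ interchanges $R$ and $N$, and therefore $1-t$ and $1-t^{-1}$ lie in opposite classes. Now the nontrivial powers of $-c$ number $m-1$ (all lying in $R$, all with $1-t\ne 0$), and since $m$ is odd none of them is its own inverse, so they fall into exactly $\frac{m-1}{2}$ inverse-pairs $\{t,t^{-1}\}$. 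By the symmetry just noted, in each such pair \emph{exactly one} member $t$ satisfies $1-t\in N$, so precisely $\frac{m-1}{2}$ powers $x$ of $-c$ satisfy the hypothesis of Lemma \ref{lemma5.2}. But that lemma gives $2x^{-1}=-(c^{2}+c^{-2})$ for every such $x$, which determines $x$ uniquely; hence there is \emph{at most one} such power. Combining, $\frac{m-1}{2}\le 1$, so $m\le 3$, and with $m\ge 3$ we conclude $m=3$ and $c^{3}=-1$.

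With $c^{3}=-1$ the only nontrivial powers of $-c$ are $-c$ and $c^{2}=(-c)^{-1}$, and by the pairing exactly one of them, say $x$, has $1-x\in N$. I would then split into these two cases and plug into Lemma \ref{lemma5.2}. If $x=-c$ (so $1+c\in N$), then $x^{-1}=-c^{-1}$ gives $c^{2}+c^{-2}-2c^{-1}=0$; clearing denominators yields $c^{4}-2c+1=0$, and substituting $c^{4}=-c$ (which follows from $c^{3}=-1$) turns this into $3c^{4}+1=0$. If instead $x=c^{2}$ (so $1-c^{2}\in N$), then $x^{-1}=c^{-2}$ gives $c^{2}+3c^{-2}=0$, i.e.\ $c^{4}+3=0$. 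Either way one of the two advertised quartic relations holds.

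The only genuinely inventive step is the middle paragraph: the inverse-pairing symmetry that converts Lemma \ref{lemma5.2}'s uniqueness into the sharp count $\frac{m-1}{2}\le 1$. Everything after $c^{3}=-1$ is a short, routine substitution, so I expect the counting argument for $m=3$ to be where the real content lies; the rest I would dispatch quickly once that is in hand.
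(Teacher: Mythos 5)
Your proof is correct, and it reaches the paper's conclusion by the same two ingredients --- the fact that in an inverse pair $\{t,t^{-1}\}\subset R$ exactly one of $1-t$, $1-t^{-1}$ lies in $N$ (your identity $1-t^{-1}=(-t^{-1})(1-t)$ is exactly the paper's ``quotient'' observation), and the uniqueness of $x$ forced by Lemma \ref{lemma5.2}. Where you differ is in how these are deployed to get $c^{3}=-1$. The paper looks at just the two specific inverse pairs $\{c^{2},c^{-2}\}=\{(-c)^{\pm 2}\}$ and $\{-c,-c^{-1}\}=\{(-c)^{\pm 1}\}$, extracts from each its one member $x$ with $1-x\in N$, and uses Lemma \ref{lemma5.2} to force those two members to coincide; equating an element of the first pair with one of the second immediately yields $c^{3}=-1$ (after discarding $c=-1$). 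You instead introduce the order $m$ of $-c$, show the nontrivial powers of $-c$ split into $\tfrac{m-1}{2}$ inverse pairs each contributing exactly one admissible $x$, and bound that count by $1$. Your version is a clean generalization that makes explicit why the answer is governed by $\mathrm{ord}(-c)$ and shows nothing depends on a lucky choice of pairs; the paper's is a little shorter since the first two pairs already suffice. The endgame --- substituting $x^{-1}$ into $c^{2}+c^{-2}+2x^{-1}=0$ in the two cases $x=c^{2}$ and $x=-c=c^{-2}$ to get $c^{4}+3=0$ or $3c^{4}+1=0$ --- is the same computation in both.
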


\begin{proof}
There is an $x$ in $\{c^{2},c^{-2}\}$ such that $1-x$ is in $N$. For neither $1-c^{2}$ nor $1-c^{-2}$ is 0, and if both were in $R$, their quotient, $-c^{2}$, would be in $R$. Similarly there is a $y$ in $\{-c,-c^{-1}\}$ such that $1-y$ is in $N$. By Lemma \ref{lemma5.2}, $c^{2}+c^{-2}+2x^{-1}$ and $c^{2}+c^{-2}+2y^{-1}$ are both 0. So $x=y$, and $c^{3}=-1$. Also, since $c^{2}+c^{-2}+2x^{-1}=0$, either $c^{2}+3c^{-2}$ or $3c^{2}+c^{-2}$ is 0.
\end{proof}

\begin{proposition}
\label{prop5.4}
$p=7$. Furthermore either $c=3$ and $\lambda=(0 \infty)(1 3)(2 6)(4 5)$, or $c=5$ and $\lambda=(0 \infty)(1 5)(2 3)(4 6)$
\end{proposition}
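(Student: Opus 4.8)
The plan is to substitute the constraint $c^{3}=-1$ from Lemma \ref{lemma5.3} into its two quartic alternatives, which forces explicit values of both $c$ and $p$, and then to use the relation $c^{n}=c$ to determine $n$ and hence $\lambda$ completely.

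First I would exploit $c^{3}=-1$ to write $c^{4}=-c$. Feeding this into the alternatives of Lemma \ref{lemma5.3}, the equation $c^{4}+3=0$ collapses to $c=3$, while $3c^{4}+1=0$ collapses to $c=3^{-1}$. In either case $c^{3}=-1$ yields $3^{3}\equiv -1\pmod{p}$, that is $p\mid 28$; since $p$ is an odd prime with $p>3$ this gives $p=7$. Computing $3^{-1}$ in $(\newz/7)^{*}$ then identifies the two candidate values as $c=3$ and $c=5$.

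Next I would pin down the exponent $n$. Since $c^{3}=-1\ne 1$ while $c^{6}=1$, the element $c$ has order exactly $6$ in $(\newz/7)^{*}$. Corollary \ref{corollary4.2}(3) supplies $c^{n}=c$, so $c^{n-1}=1$ and $n\equiv 1\pmod{6}$. Because $(\newz/7)^{*}$ is cyclic of order $6$, this forces $z^{n}=z$ for every $z$ in $(\newz/7)^{*}$, and the formulas of Corollary \ref{corollary4.2} reduce to $\lambda(z)=cz$ on $R$ and $\lambda(z)=c^{-1}z$ on $N$, together with the interchange of $0$ and $\infty$.

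It then remains to tabulate $\lambda$. With $R=\{1,2,4\}$ and $N=\{3,5,6\}$ in $\newz/7$, taking $c=3$ (so $c^{-1}=5$) produces $\lambda=(0 \infty)(1 3)(2 6)(4 5)$, while $c=5$ (so $c^{-1}=3$) produces $\lambda=(0 \infty)(1 5)(2 3)(4 6)$, exactly the two involutions in the statement. I expect no genuine difficulty beyond the exponent bookkeeping in the third step: the earlier lemmas determine $n$ only modulo $\frac{p-1}{2}=3$, and it is precisely the relation $c^{n}=c$, combined with $c$ having order $6$, that upgrades this to $n\equiv 1\pmod{6}$ and thereby lets $z^{n}=z$ hold on all of $(\newz/7)^{*}$ rather than merely on $R$.
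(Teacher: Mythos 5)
Your proposal is correct and follows essentially the same route as the paper: deduce $c=3$ or $c=3^{-1}$ from $c^{3}=-1$ and the quartic alternatives, get $27\equiv -1\pmod p$ hence $p=7$, use $c^{n}=c$ with $c$ a generator of $(\newz/7)^{*}$ to conclude $z^{n}=z$ for all $z$, and then read off $\lambda$ from Corollary \ref{corollary4.2}. The only cosmetic difference is that you phrase the generator step via $n\equiv 1\pmod 6$ rather than directly noting that every $z$ is a power of $c$; the paper's second case also solves $3c=1$ directly instead of inverting, but the computations are identical.
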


\begin{proof}
Suppose $c^{4}+3=0$. Then, since $c^{3}=-1$, $c=3$. Also $27=-1$ in $\newz/p$, and so $p=7$. We know that $c^{n}=c$ in $(\newz/p)^{*}$. Since $c=3$ is a generator of $(\newz/7)^{*}$, $z^{n}=z$ for all $z$ in $(\newz/7)^{*}$. In particular if $z$ is in $R$, $\lambda(z)=cz^{n}=3z$, and so $\lambda=(0 \infty)(1 3)(2 6)(4 5)$.

Suppose $3c^{4}+1=0$. Then since $c^{3}=-1$, $3c=1$. So $27c^{3}=1$, $-27=1$ in $\newz/p$, and once again $p=7$. Since $3c=1$, $c=5$. Arguing as in the paragraph above we find that $\lambda=(0 \infty)(1 5)(2 3)(4 6)$.
\end{proof}

Suppose now that $c=3$. Then $z\rightarrow z+1$ is in $G$, and since 2 is in $R$, $z\rightarrow 2z$ is also in $G$. To complete the proof of the classification theorem for $c=3$ it suffices to show that the group of permutations of $\newz/7\cup\{\infty\}$ generated by $z\rightarrow z+1$, $z\rightarrow 2z$ and $\lambda$ is of order 168, and has a normal subgroup of order 8 (since $G$ contains this group, and $|G|=168$).  This can be shown by brute force, but here's a conceptual argument using some of the theory of finite fields.

Let $F$ be the field of 8 elements, $\zeta$ be a generator of $F^{*}$, and $U$ be the group of permutations of $F$ generated by $x\rightarrow x+1$, $x\rightarrow \zeta x$ and $x\rightarrow x^{2}$. If $r$ is in $F^{*}$, the conjugate of $x\rightarrow x+1$ by $x\rightarrow rx$ is $x\rightarrow x+r$. It follows that $x\rightarrow x+1$ and $x\rightarrow \zeta x$ generate the ``affine group'' of $F$, a group of order $7\cdot 8=56$. Furthermore $x\rightarrow x^{2}$ is a permutation of $F$ of order 3 normalizing the affine group. We conclude that $|U|=56\cdot 3 = 168$. The translations $x\rightarrow x+a$ evidently form a normal subgroup of $U$ with 8 elements.

Now identify $F$ with $\newz/7\cup\{\infty\}$ by mapping 0 to $\infty$ and $\zeta^{i}$ to $i$. Then $U$ may be viewed as a group of permutations of $\newz/7\cup\{\infty\}$ of order 168. $x\rightarrow \zeta x$ is the permutation $z\rightarrow z+1$, while $x\rightarrow x^{2}$ is the permutation $z\rightarrow 2z$. Now $\zeta$ has degree 3 over $\newz/2$, and so $\zeta^{3}+\zeta+1=0$ or $\zeta^{3}+\zeta^{2}+1=0$. Choose $\zeta$ so that $\zeta^{3}+\zeta+1=0$. Then, $1+1=0$, $1+\zeta=\zeta^{3}$, $1+\zeta^{2}=\zeta^{6}$ and $1+\zeta^{4}=\zeta^{12}=\zeta^{5}$. So $x\rightarrow x+1$ is the permutation $(0 \infty)(1 3)(2 6)(4 5)$ of $\newz/7\cup\{\infty\}$. Thus the group generated by $z\rightarrow z+1$, $z\rightarrow 2z$ and $(0 \infty)(1 3)(2 6)(4 5)$ identifies with $U$, and has order 168, and a normal subgroup of order 8. The argument is the same when $c=5$, except that we now take $\zeta$ with $\zeta^{3}+\zeta^{2}+1=0$.

\section{Simplicity results for $\bm{PSL_{2}(F)}$ and final remarks}
\label{section6}

The simplicity result of Galois has been generalized in various ways. For example if $F$ is any field with more than 3 elements, finite or infinite, then $PSL_{2}(F)$ is a simple group. I'll give one of the many proofs of this result. Let $N$ be a normal subgroup of $SL_{2}(F)$ containing some non-scalar matrix. If suffices to show that $N=SL_{2}(F)$.

\begin{lemma}
\label{lemma6.1}
There is an $\left(\begin{smallmatrix}a&b\\c&d\end{smallmatrix}\right)$ in $N$ with $b\ne 0$.
\end{lemma}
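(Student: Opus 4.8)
The plan is to argue by contradiction. Suppose instead that every matrix in $N$ has upper-right entry $0$, so that $N$ is contained in the group of lower-triangular matrices $\left(\begin{smallmatrix}a&0\\c&d\end{smallmatrix}\right)$ with $ad=1$. The idea is to use normality: I would conjugate a well-chosen element of $N$ by a fixed element of $SL_{2}(F)$ and exhibit a conjugate — necessarily again in $N$ — whose upper-right entry is nonzero, contradicting the supposition.

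Concretely, fix a non-scalar $M=\left(\begin{smallmatrix}a&0\\c&d\end{smallmatrix}\right)$ in $N$, which exists by hypothesis, and split into two cases according to the lower-left entry $c$. If $c\ne 0$, conjugation by $w=\left(\begin{smallmatrix}0&-1\\1&0\end{smallmatrix}\right)$ (which lies in $SL_{2}(F)$) sends $M$ to $\left(\begin{smallmatrix}d&-c\\0&a\end{smallmatrix}\right)$, whose upper-right entry is $-c\ne 0$; since this conjugate lies in $N$ by normality, we have the desired contradiction.

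If instead $c=0$, then $M$ is diagonal and non-scalar, so $d=a^{-1}\ne a$ and hence $a^{2}\ne 1$. Here I would conjugate by the unipotent element $\left(\begin{smallmatrix}1&1\\0&1\end{smallmatrix}\right)$; a direct computation shows the upper-right entry of the conjugate equals $a^{-1}-a$, which is nonzero precisely because $a^{2}\ne 1$. Again this conjugate lies in $N$, contradicting the supposition. These two cases are exhaustive, and together they finish the argument.

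I do not expect a genuine obstacle: the lemma is elementary and the only point requiring a little care is the case split, since no single conjugation obviously handles both the strictly lower-triangular and the diagonal possibilities for $M$. (One could alternatively conjugate $M$ by $\left(\begin{smallmatrix}1&t\\0&1\end{smallmatrix}\right)$ and note that the upper-right entry of the result is the polynomial $-ct^{2}+(d-a)t$, which is a nonzero polynomial of degree $\le 2$ in $t$ as soon as $M$ is non-scalar, hence nonzero for some $t$ because $|F|>3$; but the two-case version above avoids any appeal to the size of $F$.)
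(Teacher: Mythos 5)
Your proof is correct and follows essentially the same route as the paper's: the paper also argues that if every element of $N$ had $b=0$ then normality (your conjugation by $\left(\begin{smallmatrix}0&-1\\1&0\end{smallmatrix}\right)$) would force every element to have $c=0$ as well, hence be diagonal, and then conjugates a non-scalar diagonal element by $\left(\begin{smallmatrix}1&1\\0&1\end{smallmatrix}\right)$ to reach a contradiction. Your version merely makes the two conjugation computations explicit.
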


\begin{proof}
If not, then since $N$ is normal, every element of $N$ also has $c=0$, and so is diagonal. But if $\left(\begin{smallmatrix}a&0\\0&d\end{smallmatrix}\right)$ is a non-scalar element of $N$, the conjugate of $\left(\begin{smallmatrix}a&0\\0&d\end{smallmatrix}\right)$ by $\left(\begin{smallmatrix}1&1\\0&1\end{smallmatrix}\right)$ isn't diagonal.
\end{proof}

Now let $P$ and $P^{\prime}$ be the subgroups $\left(\begin{smallmatrix}1&0\\ *&1\end{smallmatrix}\right)$ and $\left(\begin{smallmatrix}1&*\\0&1\end{smallmatrix}\right)$ of $SL_{2}(F)$. $P$ and $P^{\prime}$ evidently generate $SL_{2}(F)$. Since $N$ is normal, $PN=NP$, and is the subgroup of $SL_{2}(F)$ generated by $P$ and $N$.

\begin{remark*}
If we can show that $N\supset P$, then since it is normal it also contains $P^{\prime}$, and so $N=SL_{2}(F)$.
\end{remark*}

\begin{lemma}
\label{lemma6.2}
$PN=NP=SL_{2}(F)$.
\end{lemma}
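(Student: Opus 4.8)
We want to show $PN = NP = SL_2(F)$. Since $P$ and $P'$ generate $SL_2(F)$ (as already noted), and since $PN = NP$ is the subgroup generated by $P$ and $N$, it suffices to show that $PN$ contains $P'$. Equivalently, by the Remark preceding the lemma, it would be enough to get $P \subset N$; but the cleaner route is to show directly that the subgroup $PN$ is all of $SL_2(F)$.

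**The plan.**

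The plan is to exploit Lemma 6.1, which hands us an element $\left(\begin{smallmatrix}a&b\\c&d\end{smallmatrix}\right)$ of $N$ with $b \neq 0$. The key idea is that $PN$ is closed under left and right multiplication by elements of $P = \left(\begin{smallmatrix}1&0\\ *&1\end{smallmatrix}\right)$. So starting from our element of $N$, I would multiply on the left and right by suitable lower-triangular unipotent matrices to clear entries and simplify. Concretely, left-multiplying $\left(\begin{smallmatrix}a&b\\c&d\end{smallmatrix}\right)$ by $\left(\begin{smallmatrix}1&0\\t&1\end{smallmatrix}\right)$ and right-multiplying by $\left(\begin{smallmatrix}1&0\\s&1\end{smallmatrix}\right)$ produces a two-parameter family of elements of $PN$, all with the same $(1,2)$-entry $b \neq 0$. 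The goal is to choose $s,t$ so that the product lands in $P'$, or more ambitiously so that $PN$ is seen to contain enough elements to force $PN = SL_2(F)$.

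**Carrying it out.**

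First I would compute the product $\left(\begin{smallmatrix}1&0\\t&1\end{smallmatrix}\right)\left(\begin{smallmatrix}a&b\\c&d\end{smallmatrix}\right)\left(\begin{smallmatrix}1&0\\s&1\end{smallmatrix}\right)$ and read off its four entries as polynomials in $s$ and $t$ over $F$. Because $b \neq 0$, the $(1,1)$-entry $a + bs$ can be made to equal any prescribed value in $F$ by choosing $s$; in particular I can force it to be $1$. Once the $(1,1)$-entry is $1$, I would then use the parameter $t$ to kill the $(2,1)$-entry, landing in $P' = \left(\begin{smallmatrix}1&*\\0&1\end{smallmatrix}\right)$. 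Since the determinant is $1$, the resulting matrix is automatically $\left(\begin{smallmatrix}1&b\\0&1\end{smallmatrix}\right)$ with $b \neq 0$, so $PN$ contains a nontrivial element of $P'$. Conjugating this single element of $P'$ by the torus (diagonal matrices, which normalize $P'$ and rescale the upper entry) sweeps out all of $P'$, provided the scaling action is by the full set of squares or all nonzero scalars; this is where I would invoke $|F| > 3$ to guarantee enough scalars. Hence $PN \supset P'$, and since $PN \supset P$ as well, $PN = SL_2(F)$.

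**The main obstacle.**

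The step I expect to require the most care is the passage from a single nontrivial element of $P'$ to all of $P'$. Having one matrix $\left(\begin{smallmatrix}1&b\\0&1\end{smallmatrix}\right)$ in $PN$ does not immediately give the whole one-parameter subgroup, since $PN$ is only a \emph{set} $PN = NP$ (a subgroup, but not obviously containing the torus). The honest way to finish is to observe that $PN$, being a subgroup containing both $P$ and a nontrivial element of $P'$, must contain commutators and conjugates that generate more of $SL_2(F)$; alternatively one shows directly that the subgroup generated by $P$ together with one nontrivial upper unipotent already contains all of $P'$, using that $\left(\begin{smallmatrix}1&0\\t&1\end{smallmatrix}\right)$ and $\left(\begin{smallmatrix}1&b\\0&1\end{smallmatrix}\right)$ generate a copy of $SL_2$ of a subfield large enough to recover every diagonal scaling when $|F|>3$. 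Pinning down exactly where the hypothesis $|F|>3$ enters — namely in ensuring the squares of $F^*$ act with enough freedom to produce every element of $P'$ — is the crux of the argument.
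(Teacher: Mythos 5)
Your opening moves coincide with the paper's: treat left and right multiplication by elements of $P$ as row and column operations on the matrix $\left(\begin{smallmatrix}a&b\\c&d\end{smallmatrix}\right)$ supplied by Lemma \ref{lemma6.1}, using $b\ne 0$ to steer entries where you want them, and your normalization does correctly place a nontrivial unipotent $u=\left(\begin{smallmatrix}1&b\\0&1\end{smallmatrix}\right)$ inside the subgroup $PN$. The gap is exactly the step you flag as the crux: passing from this single element of $P'$ to all of $P'$. The routes you sketch for it do not work as stated. The torus is not available --- nothing so far puts any non-identity diagonal matrix in $PN$, as you concede --- and the appeal to $|F|>3$ is a misdiagnosis: that hypothesis plays no role in this lemma (the paper states Lemma \ref{lemma6.2} for arbitrary $F$; the condition $|F|>3$ enters only in Proposition \ref{prop6.3}, where one needs $a\ne 0,\pm 1$). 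Even granting the torus, conjugation would only rescale $b$ by squares, so ``enough scalars'' is not the right mechanism.

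The missing idea --- which is the paper's device --- is to aim for an \emph{anti-diagonal} matrix rather than a unipotent one. The same row and column operations you used (clear the $(2,2)$-entry by a left multiplication, then the $(1,1)$-entry by a right multiplication) put a matrix $w=\left(\begin{smallmatrix}0&b\\ *&0\end{smallmatrix}\right)$ into $PN$; alternatively, starting from your $u$, the one-line computation $\left(\begin{smallmatrix}1&0\\-b^{-1}&1\end{smallmatrix}\right)u\left(\begin{smallmatrix}1&0\\-b^{-1}&1\end{smallmatrix}\right)=\left(\begin{smallmatrix}0&b\\-b^{-1}&0\end{smallmatrix}\right)$ produces such a $w$ inside the subgroup generated by $P$ and $u$. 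Since $wPw^{-1}=P'$ (conjugation sends $\left(\begin{smallmatrix}1&0\\t&1\end{smallmatrix}\right)$ to $\left(\begin{smallmatrix}1&-b^{2}t\\0&1\end{smallmatrix}\right)$, and $-b^{2}t$ runs over all of $F$), the subgroup $PN$ contains $P'$ wholesale, hence equals $\langle P,P'\rangle=SL_{2}(F)$, with no scaling argument and no hypothesis on $|F|$.
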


\begin{proof}
Take $\left(\begin{smallmatrix}a&b\\c&d\end{smallmatrix}\right)$ in $N$ as in Lemma \ref{lemma6.1}. Multiplying this matrix on the left by $\left(\begin{smallmatrix}1&0\\r&1\end{smallmatrix}\right)$ has the effect of adding $r\cdot$ (row 1) to row 2. So $PN$ contains a matrix $\left(\begin{smallmatrix}a&b\\ *&0\end{smallmatrix}\right)$. Multiplying this new matrix on the right by $\left(\begin{smallmatrix}1&0\\s&1\end{smallmatrix}\right)$ has the effect of adding $s\cdot$ (column 2) to column 1. So $PNP=PN$ contains a matrix $\left(\begin{smallmatrix}0&b\\ *&0\end{smallmatrix}\right)$. This matrix conjugates $P$ into $P^{\prime}$. So $PN$ contains $P^{\prime}$ as well as $P$, and is all of $SL_{2}(F)$.
\end{proof}

\begin{proposition}
\label{prop6.3}
If $|F|>3$, $N\supset P$. So by the remark above, $N=SL_{2}(F)$. Consequently $SL_{2}(F)$ is simple.
\end{proposition}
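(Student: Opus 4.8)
The goal is to prove Proposition~\ref{prop6.3}: when $|F|>3$, the normal subgroup $N$ contains $P$, the group of lower unitriangular matrices $\left(\begin{smallmatrix}1&0\\ *&1\end{smallmatrix}\right)$. By Lemma~\ref{lemma6.2} we already know that $PN=SL_{2}(F)$, so the plan is to leverage this factorization together with the abelian structure of $P$. My approach is to exploit the commutator subgroup. Since $N$ is normal, for any $g\in N$ and $h\in P$ the commutator $[h,g]=h\,g\,h^{-1}g^{-1}$ lies in $N$ (as $g h^{-1} g^{-1}\in N$ and $h\in P$ will need care --- more precisely $h(gh^{-1}g^{-1})\in N$ since the second factor is in $N$). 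I would compute such commutators explicitly: write a general element of $N$ via the factorization and conjugate a fixed lower unitriangular matrix by it, subtracting off to produce a new lower unitriangular matrix that must land in $N$.

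Concretely, I would first produce \emph{some} nontrivial element of $P$ inside $N$. Take $\left(\begin{smallmatrix}a&b\\c&d\end{smallmatrix}\right)\in N$ with $b\ne 0$ from Lemma~\ref{lemma6.1}. Conjugating a lower unitriangular matrix $\left(\begin{smallmatrix}1&0\\r&1\end{smallmatrix}\right)$ by this element, and then multiplying by the inverse of the original lower unitriangular matrix (both operations staying inside the normal subgroup $N$), should yield a matrix of the form $\left(\begin{smallmatrix}1&0\\ *&1\end{smallmatrix}\right)$ whose lower-left entry is a nonzero multiple of $r$ times an expression in $a,b,c,d$. The key computation is to show that as $r$ ranges over $F$, these lower-left entries fill out all of $F$, so that every element of $P$ is obtained.

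The place where $|F|>3$ must enter is exactly here: the entries produced are likely of the form $r\cdot(\text{something depending on } a,b)$, or after further manipulation of the shape $r\,\lambda^{2}$ for $\lambda$ ranging over scalars, and the obstruction is that the set of values $\{\lambda^{2}:\lambda\in F^{*}\}$ (or $\{r(a+rc)b \text{ type terms}\}$) might fail to span $F$ additively for the two small fields $F_{2}$ and $F_{3}$. So the main obstacle I anticipate is the final additive-spanning step: showing that the lower-left entries generated, as $r$ varies (and possibly as we take products of several such commutators), generate the additive group $(F,+)$. For $|F|=2,3$ the relevant set is too small to span, which is precisely why those cases are excluded; for $|F|>3$ one can either exhibit two distinct nonzero values whose difference is a unit, or argue that a proper additive subgroup stable under the available scalar multiplications must be trivial or everything. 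Once $N\supset P$ is established, the Remark preceding Lemma~\ref{lemma6.2} closes the argument: normality forces $N\supset P'$, and since $P,P'$ generate $SL_{2}(F)$, we get $N=SL_{2}(F)$, giving simplicity of $PSL_{2}(F)$.
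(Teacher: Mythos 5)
Your overall strategy---commute elements of $P$ against a suitable element of $N$, observe the results land in $N$, and show they sweep out all of $P$---is indeed the paper's strategy, but your concrete implementation has a genuine gap: the element of $N$ you propose to commute against is the wrong one, and with it the commutators do not lie in $P$ at all. If $g=\left(\begin{smallmatrix}a&b\\c&d\end{smallmatrix}\right)$ is the matrix from Lemma \ref{lemma6.1} with $b\ne 0$ and $h=\left(\begin{smallmatrix}1&0\\r&1\end{smallmatrix}\right)$, then $ghg^{-1}=\left(\begin{smallmatrix}1+rbd&-rb^{2}\\rd^{2}&1-rbd\end{smallmatrix}\right)$, and right-multiplying by $h^{-1}$ leaves the upper-right entry equal to $-rb^{2}$, which is nonzero for every $r\ne 0$ precisely because $b\ne 0$. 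So $ghg^{-1}h^{-1}$ is never a nontrivial element of $P$. For such a commutator to be lower unitriangular you need the element of $N$ to be lower \emph{triangular} with nontrivial diagonal---the opposite of what Lemma \ref{lemma6.1} hands you.

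The missing idea is how to manufacture such an element, and that is exactly what Lemma \ref{lemma6.2} is for. Pick $a\ne 0,1,-1$ in $F$ (possible since $|F|>3$) and use $PN=SL_{2}(F)$ to write $\left(\begin{smallmatrix}a&0\\0&a^{-1}\end{smallmatrix}\right)=\left(\begin{smallmatrix}1&0\\-r&1\end{smallmatrix}\right)B$ with $B\in N$; then $B=\left(\begin{smallmatrix}a&0\\ra&a^{-1}\end{smallmatrix}\right)$ is forced to be lower triangular with diagonal $(a,a^{-1})$. Now for $A=\left(\begin{smallmatrix}1&0\\s&1\end{smallmatrix}\right)$ the product $(ABA^{-1})B^{-1}$ lies in $P\cap N$ automatically and computes to $\left(\begin{smallmatrix}1&0\\s(1-a^{-2})&1\end{smallmatrix}\right)$. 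This also corrects your guess about where $|F|>3$ enters: it is not an additive-spanning issue about squares requiring products of several commutators, but simply the need for an $a$ with $a^{2}\ne 1$, which makes $1-a^{-2}$ a unit so that $s(1-a^{-2})$ runs over all of $F$ as $s$ does. One commutator family then exhausts $P$ in a single step.
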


\begin{proof}
Take $a\ne 0$, $1$ or $-1$ in $F$ and let $d=a^{-1}$. By Lemma \ref{lemma6.2}, $\left(\begin{smallmatrix}a&0\\0&d\end{smallmatrix}\right)=\left(\begin{smallmatrix}1&0\\-r&1\end{smallmatrix}\right)\cdot B$ for some $B$ in the normal subgroup $N$. Then $B=\left(\begin{smallmatrix}1&0\\r&1\end{smallmatrix}\right) \left(\begin{smallmatrix}a&0\\0&d\end{smallmatrix}\right) = \left(\begin{smallmatrix}a&0\\ra&d\end{smallmatrix}\right)$. A short calculation shows that as $A$ runs over all the elements of $P$, $(ABA^{-1})\cdot B^{-1}$ also runs over all the elements of $P$. Since each $(ABA^{-1})\cdot B^{-1}$ is in $N$, we're done.
\end{proof}

In presenting the material of this note to a class one might add the following remarks:

\begin{enumerate}
\item[(1)] Let $F$ be the field of $q$ elements where $q$ is a prime power. Then $PSL_{2}(F)$ has order $\frac{q^{3}-q}{2}$ or $q^{3}-q$ according as $q$ is odd or even. By Proposition \ref{prop6.3} these groups are simple for $q>3$.
\item[(2)] If $F$ is the field of $q$ elements it's true that the ``only'' simple group having the same order as $PSL_{2}(F)$ is $PSL_{2}(F)$ itself. But I think that all proofs of this generalization of Frobenius' result are very difficult. The case $q=4$ is trivial --- since $4^{3}-4=\frac{5^{3}-5}{2}=60$, uniqueness when $q=4$ follows from the uniqueness when $q=5$. The next cases of interest are $q=9$ when $|G|=\frac{9^{3}-9}{2}=360$, and $q=8$ when $|G|=8^{3}-8=504$. In 1893, F. N. Cole, \cite{3} (best known to mathematicians for the establishment in his honor of the Cole prize), used intricate arguments to handle these cases. He starts by showing that $G$ is isomorphic to a doubly transitive permutation group on $q+1$ letters. But this is no longer an easy consequence of Sylow theory, as it is in the case of prime $q$.
\item[(3)] For $n>2$, let $SL_{n}(F)$ be the group of $n$ by $n$ determinant 1 matrices with entries in $F$, and $PSL_{n}(F)$ be the quotient of $SL_{n}(F)$ by the group of determinant 1 scalar matrices. It can be shown that for all $F$ and for all $n>2$ the group $PSL_{n}(F)$ is simple. But now the generalization of Frobenius' theorem has an exception. If $F$ is the field of 4 elements then $PSL_{3}(F)$ and $PSL_{4}(\newz/2)$ are non-isomorphic simple groups of order 20,160. (The group of even permutations of 8 letters is also simple of order 20,160, but it is isomorphic to $PSL_{4}(\newz/2)$.)
\end{enumerate}

%%%%%%%%
%%%%%%%%%%%%%%%%

\label{}

% The Appendices part is started with the command \appendix;
% appendix sections are then done as normal sections
% \appendix

% \section{}
% \label{}

\end{document}